\newtheorem{definition}{Definition}%
\newtheorem{theorem}{Theorem}
\newtheorem{proposition}[theorem]{Proposition}
\newtheorem{remark}[theorem]{Remark}
\journal{Journal of Computational and Applied Mathematics}
\begin{document}

\begin{frontmatter}



\title{Recursive algorithms for computing Birkhoff interpolation polynomials}


\author{Xue Jiang}
\author{Yuanhe Li}
\author{Zhe Li\corref{cor1}} 
\cortext[cor1]{Corresponding author}
\ead{zheli200809@163.com}
\affiliation{organization={School of Mathematics and Statistics, Changchun University of Science and Technology},
            city={Changchun},
            postcode={130000},
            country={China}}
\begin{abstract}
As a generalization of Hermite interpolation problem, Birkhoff interpolation is an important subject in numerical approximation. This paper generalizes the existing Generalized Recursive Polynomial Interpolation Algorithm (GRPIA) that is used to compute the Hermite interpolation polynomial. Based on the theory of the Schur complement and the Sylvester identity, the proposed recursive algorithms are applicable to a broader class of Birkhoff interpolation problems, where each interpolation condition is given by the composition of an evaluation functional and a differential polynomial. The approach incorporates a judgment condition to ensure the problem's well-posedness and computes a lower-degree Newton-type interpolation basis (which is also a strongly proper interpolation basis) along with the corresponding interpolation polynomial. Following the numerical examples, we analyze and compare the computational process and complexity of the proposed algorithm against traditional interpolation methods based on Gaussian elimination, and thus demonstrate that the proposed recursive approach reduces both computational cost and storage space requirements.
\end{abstract}



\begin{keyword}
Birkhoff interpolation \sep Schur complement \sep Strongly proper interpolation basis \sep Newton-type basis



\end{keyword}

\end{frontmatter}



\section{Introduction}





Polynomial interpolation is of significant importance in numerous practical engineering fields and continues to motivate theoretical research. Generally, polynomial interpolation problems fall into three categories: Lagrange interpolation, Hermite interpolation, and Birkhoff interpolation. For the problems of Lagrange and Hermite interpolation, all polynomials that satisfy the homogeneous interpolation conditions form an ideal, and thus they can also be referred to as ideal interpolation. In 1966, Schoenberg proposed a general definition of the Birkhoff interpolation problem \cite{wen1}, which can also be termed the non-ideal interpolation problem.

For the Hermite interpolation problem, the derivative conditions at each node are contiguous. In contrast, Birkhoff interpolation imposes no such continuity requirement on the derivatives. Thus, Birkhoff interpolation can be considered as a generalization of the Hermite interpolation problem. The univariate Hermite interpolation problem is well-established, with comprehensive solutions available for the construction of the proper interpolation space, the interpolation polynomial, and the corresponding error analysis. However, the Birkhoff interpolation problem is more complex and lacks a complete theoretical framework. As a result, even in the univariate case, there is no unified criterion for the regularity of the interpolation problem, nor a direct method to specify the form of the interpolation polynomial.

 Schoenberg proposed the theory of the so-called incidence matrix and its regularity in \cite{wen1}, after which research on the Polya's conditions has advanced rapidly. Polya's conditions provide a necessary and sufficient criterion for the well-posedness of a two-point interpolation problem. In \cite{wen2}, Tur\'{a}n investigated the (0,2)-interpolation problem and proposed several open problems related to Birkhoff interpolation. Shi presented a consolidation of the established results concerning the regularity of both the (0,2)-interpolation problem and specific Birkhoff interpolation problems \cite{wen7}. In \cite{ration}, Xia et al. investigated the linearization of univariate Birkhoff rational interpolation and demonstrated the effectiveness of these rational methods through numerical experiments. Furthermore, many researchers have investigated specific aspects of multivariate Birkhoff interpolation problems, see \cite{wen11,wen12,lei,wen14,wen15,k.of}.


Based on the theory of Schur complement and Sylvester identity, Messaoudi et al. proposed efficient algorithms for univariate Lagrange and Hermite interpolation, namely the RPIA \cite{wen19} and the GRPIA \cite{wen20} algorithms.  In \cite{wen22}, the MRPIA algorithm was developed for a specialized class of Hermite interpolation problems where nodal conditions contain only derivatives of order zero and unity. Subsequently, for the multivariate Lagrange interpolation problem where the nodes constitute a grid, the RMVPIA algorithm was developed to compute the corresponding interpolation polynomials \cite{wen21}. Furthermore, Errachid et al. \cite{wen23} reformulated the multivariate Lagrange interpolation problem as a univariate one.  This allows the method to fully leverage established univariate schemes, such as Newton interpolation or the method of divided differences.

Recent work by Rhouni et al. has addressed the Birkhoff interpolation problem where the conditions involve only derivatives at the nodes.
In \cite{wen27},  by constructing a set of high-degree Newton-type basis that can be factorized, 
the authors proposed the RHBPIA algorithm to determine the Birkhoff interpolation polynomial. 
The algorithms proposed in this paper differs from the RHBPIA in the following two aspects: 1. The proposed algorithms incorporate a core judgment condition to ensure the non-singularity of Vandermonde matrix, and then compute a lower-degree interpolation basis and the corresponding interpolation  polynomial.  
2. Algorithm 2 considers a more general Birkhoff interpolation problem in which each interpolation condition is given by the composition of an evaluation functional and a differential polynomial. 


The structure of this paper is as follows. Section 2 introduces fundamental concepts of the Birkhoff interpolation problem,  the concept of Schur complement, and the matrix  Sylvester identity. Section 3 defines the concept of a strongly proper interpolation basis and derives a recursive interpolation formula for a given Birkhoff interpolation problem. Section 4 proposes two algorithms for the univariate Birkhoff interpolation problem and provides numerical examples to demonstrate their performance. The final section provides conclusions.
\section{Preliminary}

We first introduce some relevant concepts  related to Birkhoff interpolation problem.
Throughout the paper, let $\mathbb{F}$ denote either the real field $\mathbb{R}$ or the complex field $\mathbb{C}$. $\mathbb{F}[x]$ stands for  the polynomial ring over $\mathbb{F}$. Let $x_0, x_1, \dots, x_n$ be  $n+1$ pairwise distinct interpolation nodes.  
The given $N$ interpolation conditions can be expressed by linear functionals, i.e., the composite operation of evaluation functionals and differential operators
\begin{align*}
   \begin{array}{cccc}
          \delta_{x_0}\circ D_x^{\alpha_{0,1}},& \delta_{x_0}\circ D_x^{\alpha_{0,2}},& \dots,& \delta_{x_0}\circ D_x^{\alpha_{0,s_0}}, \\
          \delta_{x_1}\circ D_x^{\alpha_{1,1}},& \delta_{x_1}\circ D_x^{\alpha_{1,2}},& \dots,& \delta_{x_1}\circ D_x^{\alpha_{1,s_1}}, \\
          \vdots & \vdots & ~ & \vdots\\
          \delta_{x_n}\circ D_x^{\alpha_{n,1}},& \delta_{x_n}\circ D_x^{\alpha_{n,2}},& \dots,& \delta_{x_n}\circ D_x^{\alpha_{n,s_n}}, \\
        \end{array} 
\end{align*}
where $D_x=\frac{d}{dx},$ and $\alpha_{i,j} \in \mathbb{N}, i=0,1,\dots,n, j=1,2,\dots,s_i.$
In this paper, we define the set of pairs
\begin{equation}\label{formula:e}
 \tilde e= \{(\beta, \alpha) \mid \beta  \in \{0,1,\dots,n\},\alpha \in \{\alpha_{\beta,1},\alpha_{\beta,2},\dots,\alpha_{\beta,s_{\beta}}\} \},
 \end{equation}
and the total number of elements in $\tilde e$ is $N$.
The Birkhoff interpolation problem is to find a polynomial subspace $\mathcal{F}\subset \mathbb{F}[x]$ such that there exists a unique $p(x)\in \mathcal{F}$ satisfying the following interpolation conditions
\begin{equation*}
  p^{(\alpha)}(x_\beta)=y_{\beta, \alpha},~~ \forall (\beta, \alpha) \in \tilde e ,  
\end{equation*}
where $y_{\beta, \alpha}\in \mathbb{F}$ are pre-specified interpolation values. \( p(x) \)  is called the interpolation polynomial,
 \( \mathcal{F} \) is called a proper interpolation space, and its dimension is equal to the number of interpolation conditions, i.e., $\dim{\mathcal{F}}=N.$

\begin{definition}\label{def:order}
Given the interpolation conditions, let the set of pairs $\tilde{e}$ be defined as in  $(\ref{formula:e})$.
We say that $  T = [(\beta_1, \alpha_1), (\beta_2, \alpha_2), \dots, (\beta_N, \alpha_N)] $
is an ordered sequence of $N-$dimensional ($N-$DOS) if\\
\noindent  $\textup{(\romannumeral 1)}$  $ \alpha_1\leq \alpha_2\leq\dots\leq \alpha_N;$\\
\noindent $\textup{(\romannumeral 2)}$ if $\alpha_s=\alpha_{s+1},$ then \ $\beta_{s}< \beta_{s+1}.$\\
Here $\beta_1, \beta_2, \dots, \beta_N $ are the node numbers \( 0, 1, \dots, n \), 
and $\alpha_i,i=1,2,\dots,N$ denotes the derivative order corresponding to the interpolation node $x_{\beta_i}$.
\end{definition}
In the following text, for a given Birkhoff interpolation problem, we always sort the interpolation conditions according to the above definition. The interpolation conditions are as follows:
\begin{align}\label{equa2.1}
   \delta_{x_{\beta_1}}\circ D_x^{\alpha_1},\delta_{x_{\beta_2}}\circ D_x^{\alpha_2},\dots,\delta_{x_{\beta_N}}\circ D_x^{\alpha_N},
\end{align}
in which $  T = [(\beta_1, \alpha_1), (\beta_2, \alpha_2), \dots, (\beta_N, \alpha_N)] $ is an $N-$DOS. For example, let the Birkhoff interpolation problem be defined by the following conditions
\begin{displaymath}
    \delta_{x_0},\delta_{x_1}\circ D_x,\delta_{x_1}\circ D_x^2,\delta_{x_2}\circ D_x^2,
\end{displaymath}
then, $  T = [(\beta_1, \alpha_1), (\beta_2, \alpha_2), (\beta_3, \alpha_3), (\beta_4, \alpha_4)] $ and $(\beta_1, \alpha_1)=(0,0),(\beta_2, \alpha_2)=(1,1), (\beta_3, \alpha_3)=(1,2), (\beta_4, \alpha_4)=(2,2)$.
\begin{definition}\cite{3M}
  If $L_i, i=1,2,...,N$ are linear functionals, a triangular sequence $(g_1,g_2,\dots,g_N)$ of polynomials for $L_1,L_2,\dots,L_N$ is a sequence such that
  $$L_{i}(g_j)=\left\{
    \begin{array}{ll}
      1, & i=j; \\
      0, & i<j.
    \end{array}
  \right.$$
\end{definition}
Note that a triangular sequence is just the Newton-type basis of the interpolation problem.
\begin{theorem}\cite{3M,82lunwen}\label{zhongzhi}
  Given linear functionals $L_i, i=1,2,...,N$, a triangular sequence $(g_1,g_2,\dots,g_N)$ exists if and only if $L_1,L_2,\dots,L_N$ are linear independent.
\end{theorem}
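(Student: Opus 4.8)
The plan is to establish the two implications separately, regarding $L_1,\dots,L_N$ as linear functionals on $\mathbb{F}[x]$ (the argument in fact uses nothing about $\mathbb{F}[x]$ beyond its being an $\mathbb{F}$-vector space).

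\textbf{Necessity.} Assume a triangular sequence $(g_1,\dots,g_N)$ exists and suppose $\sum_{i=1}^N c_i L_i$ is the zero functional. I would prove $c_N = c_{N-1} = \cdots = c_1 = 0$ by downward induction on the index. Evaluating the zero functional at $g_N$ annihilates every term $c_i L_i(g_N)$ with $i < N$ and leaves $c_N L_N(g_N) = c_N$, so $c_N = 0$; once $c_{j+1} = \cdots = c_N = 0$ is known, evaluating at $g_j$ leaves only $c_j L_j(g_j) = c_j$, forcing $c_j = 0$. Hence $L_1,\dots,L_N$ are linearly independent. The only subtlety is that the triangularity conditions $L_i(g_j) = 0$ are imposed solely for $i < j$, which is precisely why the induction must run from $N$ downward.

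\textbf{Sufficiency.} Assume $L_1,\dots,L_N$ are linearly independent and construct the $g_j$ one index at a time. Fix $j$; by independence, $L_j \notin \mathrm{span}(L_1,\dots,L_{j-1})$ (for $j=1$ this merely says $L_1 \neq 0$). The key auxiliary fact is the standard duality statement: for linear functionals on any vector space, $f \in \mathrm{span}(f_1,\dots,f_k)$ if and only if $\bigcap_{i=1}^{k}\ker f_i \subseteq \ker f$. Its contrapositive yields an $h_j \in \mathbb{F}[x]$ with $L_i(h_j) = 0$ for all $i < j$ and $L_j(h_j) \neq 0$; putting $g_j := h_j / L_j(h_j)$ gives $L_i(g_j) = 0$ for $i < j$ and $L_j(g_j) = 1$. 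Then $(g_1,\dots,g_N)$ is a triangular sequence.

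\textbf{Main obstacle.} The only step needing care is the auxiliary duality fact, applied here in the infinite-dimensional space $\mathbb{F}[x]$. I would recover it by considering the linear map $\Phi\colon \mathbb{F}[x] \to \mathbb{F}^{\,j-1}$, $p \mapsto (L_1(p),\dots,L_{j-1}(p))$: since $\ker\Phi = \bigcap_{i<j}\ker L_i$, if this kernel were contained in $\ker L_j$ then $L_j$ would descend to a functional on $\mathrm{im}\,\Phi$, which extends to a functional on the finite-dimensional $\mathbb{F}^{\,j-1}$ and thereby exhibits $L_j$ as a linear combination of $L_1,\dots,L_{j-1}$ — contradicting independence. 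No choice principle is needed because $\mathbb{F}^{\,j-1}$ is finite-dimensional, so the whole argument is elementary.
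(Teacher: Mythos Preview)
Your proof is correct and follows the standard argument. Note, however, that the paper does not supply its own proof of this theorem: it is stated as a cited result from \cite{3M,82lunwen} (Marinari--M\"{o}ller--Mora and Heuser's \emph{Functional Analysis}), so there is no in-paper proof to compare against. Your treatment of the duality step via the finite-dimensional target $\mathbb{F}^{\,j-1}$ is the right way to avoid any appeal to Hahn--Banach or choice, and the downward induction for necessity is exactly what the triangular structure demands.
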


Next, for constructing the recursive algorithms for computing interpolation polynomials, we need to  introduce  the concept of  Schur complement and the matrix Sylvester identity, for more details, see \cite{wen24,F,G,wen26}.
\begin{definition}\label{def:schur}\cite{wen24}
Let $M$ be a matrix partitioned into four blocks
\begin{equation*}
    M=\left(
        \begin{array}{cc}
          A & B \\
          C & D \\
        \end{array}
      \right),
\end{equation*}
where the submatrix $D$ is assumed to be square and non-singular. The Schur complement of $D$ in $M$, denoted by $(M/D)$, is defined as
\begin{equation*}\label{equa2.6}
(M/D) = A - BD^{-1}C.
\end{equation*}
\end{definition}

\begin{proposition}\label{prop:rank}\cite{wen24}
Let $D$ be a non-singular matrix and $M$ be a square matrix. Then 
\begin{equation*}\label{equa2.7}
|M| = |D|\cdot|(M/D)|,
\end{equation*}
where $|\cdot|$ denotes determinant.
\end{proposition}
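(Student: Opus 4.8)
The plan is to exploit the block-triangular factorization of $M$ induced by the Schur complement of $D$. Concretely, I would first write
\begin{equation*}
M=\left(
\begin{array}{cc}
A & B \\
C & D
\end{array}
\right)
=\left(
\begin{array}{cc}
I & BD^{-1} \\
0 & I
\end{array}
\right)
\left(
\begin{array}{cc}
A-BD^{-1}C & 0 \\
C & D
\end{array}
\right),
\end{equation*}
and verify the identity by a direct block multiplication: the $(1,1)$ block is $(A-BD^{-1}C)+BD^{-1}C=A$, the $(1,2)$ block is $BD^{-1}D=B$, and the bottom row is unchanged. This step only uses that $D$ is invertible so that $D^{-1}$ exists, and it is the crux of the argument; once the factorization is in place everything else is bookkeeping.

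Next I would take determinants on both sides, using multiplicativity of the determinant, $\det(XY)=\det(X)\det(Y)$. The first factor is block upper triangular with identity blocks on the diagonal, so its determinant is $1$. The second factor is block lower triangular, hence its determinant is the product of the determinants of its diagonal blocks, namely $\det(A-BD^{-1}C)\cdot\det(D)$. Recalling from Definition \ref{def:schur} that $(M/D)=A-BD^{-1}C$, this yields $|M|=|D|\cdot|(M/D)|$, which is exactly \eqref{equa2.7}.

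The only genuine point needing care is the justification that a block upper- (resp. lower-) triangular matrix has determinant equal to the product of the determinants of its diagonal blocks; I would either invoke this as a standard fact or, if a self-contained argument is wanted, obtain it by a cofactor/Leibniz expansion or by a further factorization into elementary block operations. A subtlety worth a remark is that for the diagonal-block determinant formula one needs the diagonal blocks to be square, which holds here because $A$ is square by hypothesis and $D$ is square since $M$ is square; consequently $B$ and $C$ are of conformable (transpose-shaped) dimensions and all the products $BD^{-1}$, $D^{-1}C$, $BD^{-1}C$ are well defined. I do not anticipate any real obstacle: the proposition is essentially an immediate corollary of the Schur-complement factorization, and the main task is simply to present that factorization cleanly and then read off determinants.
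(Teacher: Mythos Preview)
Your argument is correct and is the standard block-factorization proof of this identity. Note, however, that the paper does not supply its own proof of Proposition~\ref{prop:rank}: the result is simply quoted from the literature (Brezinski~\cite{wen24}), so there is no in-paper argument to compare against.
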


\begin{proposition}\label{prop:change}\cite{wen20,wen25}
~Assuming that $D$ is a non-singular matrix. Then
\begin{equation*}\label{equa2.8}
\left(\left(
   \begin{array}{cc}
     A & B \\
     C & D \\
   \end{array}
 \right)
/D\right)=\left(\left(
   \begin{array}{cc}
     D & C \\
     B & A \\
   \end{array}
 \right)
/D\right)=\left(\left(
   \begin{array}{cc}
     B & A \\
     D & C \\
   \end{array}
 \right)
/D\right)=\left(\left(
   \begin{array}{cc}
     C & D \\
     A & B \\
   \end{array}
 \right)
/D\right).
\end{equation*}
\end{proposition}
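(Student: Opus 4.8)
The plan is to establish Proposition~\ref{prop:change} by a direct computation from the definition of the Schur complement, after observing that the four block matrices appearing in~$(\ref{equa2.8})$ are obtained from $\bigl(\begin{smallmatrix} A & B \\ C & D \end{smallmatrix}\bigr)$ merely by interchanging the two block rows, the two block columns, or both. In each of them the block $D$ occupies a full corner of the $2\times2$ partition, the ``opposite'' corner (the block sharing neither a row nor a column with $D$) is always $A$, and $B,C$ remain the coupling blocks; hence the Schur complement of $D$ is meaningful in every case --- through Definition~\ref{def:schur} and the mirror formula~$(\ref{equa2.6})$ when $D$ sits on the diagonal, and through the obvious extension (permute block rows or columns so as to move $D$ onto the diagonal first) when $D$ sits off the diagonal.

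Next I would evaluate the four Schur complements one at a time. For $\bigl(\begin{smallmatrix} A & B \\ C & D \end{smallmatrix}\bigr)$, formula~$(\ref{equa2.6})$ gives $A-BD^{-1}C$ directly. For $\bigl(\begin{smallmatrix} D & C \\ B & A \end{smallmatrix}\bigr)$ the block $D$ is now the top-left pivot, so Definition~\ref{def:schur} in the form $(M/A)=D-CA^{-1}B$, with the symbols relabelled, again produces $A-BD^{-1}C$. For $\bigl(\begin{smallmatrix} B & A \\ D & C \end{smallmatrix}\bigr)$, swapping the two block columns returns $\bigl(\begin{smallmatrix} A & B \\ C & D \end{smallmatrix}\bigr)$, and for $\bigl(\begin{smallmatrix} C & D \\ A & B \end{smallmatrix}\bigr)$, swapping the two block rows does the same; because such interchanges only relabel, respectively, the unknowns and the equations of the block linear system $M\bigl(\begin{smallmatrix}u\\v\end{smallmatrix}\bigr)=\bigl(\begin{smallmatrix}f\\g\end{smallmatrix}\bigr)$, they do not affect the reduced system obtained after eliminating the unknowns governed by $D$, so the Schur complement of $D$ is unchanged and again equals $A-BD^{-1}C$. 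Comparing the four outcomes yields the chain of identities in~$(\ref{equa2.8})$.

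I anticipate no genuine difficulty here: the argument is a short, mechanical verification. The only point requiring attention is the bookkeeping --- fixing precisely what ``the Schur complement of $D$'' means when $D$ is an off-diagonal block, and confirming that block-row and block-column interchanges that do not split $D$ commute with the elimination step defining the Schur complement. Once that is in place, each of the four expressions collapses to $A-BD^{-1}C$ and the proposition follows.
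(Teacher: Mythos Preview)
The paper does not supply its own proof of Proposition~\ref{prop:change}: the result is quoted from Cottle~\cite{wen25} and Messaoudi et~al.~\cite{wen20} without argument, so there is nothing in the paper to compare against. Your direct verification---showing that each of the four Schur complements reduces to the common expression $A-BD^{-1}C$---is correct and is exactly the sort of routine check one would expect for a cited identity of this kind. The only mildly delicate point, which you flag yourself, is the meaning of ``Schur complement of $D$'' when $D$ occupies an off-diagonal block; your convention of first permuting block rows/columns to place $D$ on the diagonal is the standard one (and indeed is how Cottle's original treatment handles general pivot positions), and once that convention is fixed the four cases collapse immediately.
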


\begin{proposition}\label{prop:E}\cite{wen20,wen25}
~Assuming that $E$ is a given linear operator, $D$ is a non-singular matrix, and both $E \ast A$ and $E \ast B$ are well-defined, then we have
\begin{equation*}\label{equa2.9}
\left(\left(
   \begin{array}{cc}
     E*A & E*B \\
     C & D \\
   \end{array}
 \right)
/D\right)=E*\left(\left(
         \begin{array}{cc}
           A & B \\
           C & D \\
         \end{array}
       \right)
/D\right).
\end{equation*}
\end{proposition}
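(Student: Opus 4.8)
The plan is to expand both sides of the claimed identity directly from Definition~\ref{def:schur} and to reduce the equality to two elementary properties of the operator $E$: its $\mathbb{F}$-linearity, and the fact that the operation $*$ commutes with right multiplication by a matrix with scalar entries.

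First I would write out the left-hand side. Since $D$ is non-singular, formula (\ref{equa2.6}) applied to the block matrix with top row $(E*A, \ E*B)$ gives
\[
\left(\left(\begin{array}{cc} E*A & E*B \\ C & D \end{array}\right)/D\right) = E*A - (E*B)D^{-1}C .
\]
Likewise, the right-hand side equals
\[
E*\left(\left(\begin{array}{cc} A & B \\ C & D \end{array}\right)/D\right) = E*(A - BD^{-1}C) .
\]
Thus the proposition is equivalent to the single identity $E*A - (E*B)D^{-1}C = E*(A - BD^{-1}C)$.

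Next, I would invoke the linearity of $E$ so that $*$ distributes over the matrix difference, $E*(A - BD^{-1}C) = E*A - E*(BD^{-1}C)$, leaving us to prove $(E*B)D^{-1}C = E*(BD^{-1}C)$. Put $P := D^{-1}C$; its entries $p_{\ell k}$ lie in $\mathbb{F}$. Writing $B = (b_{i\ell})$, the $(i,k)$ entry of $BP$ is $\sum_\ell b_{i\ell}\,p_{\ell k}$, and applying $E$ entrywise while using linearity to pull the scalars $p_{\ell k}$ through $E$ yields $E\!\left(\sum_\ell b_{i\ell}p_{\ell k}\right) = \sum_\ell p_{\ell k}\,E(b_{i\ell})$, which is precisely the $(i,k)$ entry of $(E*B)P$. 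This proves the required identity, and the proposition follows.

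The only point needing care — rather than a genuine obstacle — is to justify that every manipulation above is legitimate under the stated hypotheses: the assumption that $E*A$ and $E*B$ are well-defined guarantees that $E$ may be applied to each entry of $B$, hence to every entry of $BD^{-1}C$ (each such entry being an $\mathbb{F}$-linear combination of entries of $B$), so that $E*(BD^{-1}C)$ makes sense and the entrywise computation is valid. No deeper difficulty arises; the result is essentially a bookkeeping consequence of linearity together with the definition of the Schur complement.
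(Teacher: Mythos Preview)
The paper does not supply its own proof of this proposition: it is quoted from the references \cite{wen25,wen20} as a preliminary fact and stated without argument. Your direct verification from the definition of the Schur complement, reducing the claim to linearity of $E$ and the commutation of $E*(-)$ with right multiplication by a scalar matrix, is correct and is exactly the standard one-line justification of this identity.
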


\begin{proposition}\label{prop:identity}(\cite{wen20,wen26}~{The Sylvester identity}) 
Let $M$ be the matrix defined in Definition $\ref{def:schur}$, and $K$ be a partitioned matrix as shown below
\begin{displaymath}
K=\left(
    \begin{array}{ccc}
      E & F & G \\
      H & A & B \\
      L & C & D \\
    \end{array}
  \right).
\end{displaymath}
If $A$ and $M$ are non-singular square matrices, then
\begin{align*}
(K/M)&=((K/A)/(M/A))\\
&=\left(\left(
                       \begin{array}{cc}
                         E & F \\
                         H & A \\
                       \end{array}
                     \right)
/A\right)-\left(\left(
       \begin{array}{cc}
         F & G \\
         A & B \\
       \end{array}
     \right)
/A\right)(M/A)^{-1}\left(\left(
                \begin{array}{cc}
                  H & A \\
                  L & C \\
                \end{array}
              \right)
/A\right).
\end{align*}
\end{proposition}

\section{Main results}


For the Birkhoff interpolation problem with conditions $(\ref{equa2.1})$, if we have computed a basis for the proper interpolation space $\mathcal{F}$, i.e.,
$$\mathcal{F}=\text{span}\{q_0(x), q_1(x), \dots, q_{N-1}(x) \},$$
in which 
$\deg(q_0(x))<\deg(q_1(x))<\dots<\deg(q_{N-1}(x))$, then the interpolation polynomial $p(x)\in \mathbb{F}[x]$ can be expressed as
\begin{equation*}\label{equa2.2}
p(x)=\sum_{i=0}^{N-1}a_iq_i(x),
\end{equation*}
where  $\boldsymbol{a} =(a_0 ,a_1, \dots,a_{N-1})^\textup{T} $ is the solution to the following linear system,
\begin{equation}\label{equa2.3}
V_{N-1}\boldsymbol{a} =Y_N,
\end{equation}
with $Y_N=(y_{\beta_1,\alpha_1},y_{\beta_2,\alpha_2},\dots,y_{\beta_N,\alpha_N})^\textup{T}$ and
\begin{equation}\label{equa2.4}
V_{N-1}=\left(
                \begin{array}{ccccc}
                  q_0^{(\alpha_1)}(x_{\beta_1}) & q_1^{(\alpha_1)}(x_{\beta_1}) & \dots & q_{N-1}^{(\alpha_1)}(x_{\beta_1}) \\
                  q_0^{(\alpha_2)}(x_{\beta_2}) & q_1^{(\alpha_2)}(x_{\beta_2}) & \dots & q_{N-1}^{(\alpha_2)}(x_{\beta_2}) \\
                  \vdots & \vdots & ~& \vdots  \\
                  q_0^{(\alpha_N)}(x_{\beta_N}) & q_1^{(\alpha_N)}(x_{\beta_N}) & \dots & q_{N-1}^{(\alpha_N)}(x_{\beta_N}) \\
                \end{array}
              \right).
\end{equation}
According to the uniqueness of $p(x)$, it follows that $V_{N-1}$ is a non-singular matrix. Conversely, for an $N$-dimensional polynomial subspace, still denoted by $\mathcal{F}=\textup{span}\{q_0(x),q_1(x),\dots,q_{N-1}(x)\}$, if  the corresponding matrix $V_{N-1}$ in Equation (\ref{equa2.4}) is non-singular, then $\mathcal{F}$ is a  proper interpolation space for the Birkhoff interpolation problem, and the interpolation polynomial can be obtained by solving Equations (\ref{equa2.3}) and (\ref{equa2.4}). That is,
\begin{equation*}
p(x)=(q_0(x),q_1(x),\dots,q_{N-1}(x))\boldsymbol{a}=(q_0(x),q_1(x),\dots,q_{N-1}(x))V_{N-1}^{-1}Y_N.
\end{equation*}

\begin{definition}\label{def:8}
Suppose that 
$\{q_0(x), q_1(x),\dots,q_{N-1}(x)\}$ is a proper interpolation basis for the Birkhoff interpolation problem (\ref{equa2.1}).  If for each $i=1,2,\dots,N$, $\{q_0(x), q_1(x),\dots,q_{i-1}(x)\}$ forms a proper interpolation basis for the first $i$ conditions, then  $\{q_0(x), q_1(x),\dots,q_{N-1}(x)\}$ is called a strongly proper interpolation basis.
\end{definition}
It should be noted that $\mathcal{F}=\textup{span}\{1,x,\dots,x^{N-1}\} $ is always a strongly proper  interpolation space for Hermite interpolation problems, but this is  not  true for Birkhoff interpolation problems.

For a given Birkhoff interpolation problem  ($\ref{equa2.1}$), 
the interpolation polynomial $p_{N-1}(x)$ can be written as 
\begin{align*}
p_{N-1}(x)&=0+(q_0(x),q_1(x),\dots,q_{N-1}(x))V_{N-1}^{-1}Y_N \\
&=-\left(\left[
              \begin{array}{c|cccc}
                0 & q_0(x) & q_1(x) & \dots & q_{N-1}(x) \\ \hline
                y_{\beta_1, \alpha_1} & q_0^{(\alpha_1)}(x_{\beta_1}) & q_1^{(\alpha_1)}(x_{\beta_1}) & \dots & q_{N-1}^{(\alpha_1)}(x_{\beta_1}) \\
                y_{\beta_2, \alpha_2} & q_0^{(\alpha_2)}(x_{\beta_2}) & q_1^{(\alpha_2)}(x_{\beta_2}) & \dots & q_{N-1}^{(\alpha_2)}(x_{\beta_2}) \\
                \vdots & \vdots & \vdots & ~ & \vdots \\
                y_{\beta_N, \alpha_N} & q_0^{(\alpha_N)}(x_{\beta_N}) & q_1^{(\alpha_N)}(x_{\beta_N}) & \dots & q_{N-1}^{(\alpha_N)}(x_{\beta_N}) \\
              \end{array}
           \right]/V_{N-1} \right),
\end{align*}
where $\{q_0(x),q_1(x),\dots,q_{N-1}(x) \}$ is a strongly proper interpolation basis. This expresses $p_{N-1}(x)$
as the Schur complement of a matrix, where in Definition $\ref{def:schur}$, $A=0$ is a scalar.

We utilize the properties of the Schur complement and the Sylvester identity to compute $p_{N-1}(x)$. Let $V_{k-1}, k=1,2,\dots,N$ denote the first $k$ leading principal minors of $V_{N-1}$, i.e.,
\begin{equation*}
V_{k-1}:=\left(
                \begin{array}{ccccc}
                  q_0^{(\alpha_1)}(x_{\beta_1}) & q_1^{(\alpha_1)}(x_{\beta_1}) & \dots & q_{k-1}^{(\alpha_1)}(x_{\beta_1}) \\
                  q_0^{(\alpha_2)}(x_{\beta_2}) & q_1^{(\alpha_2)}(x_{\beta_2}) & \dots & q_{k-1}^{(\alpha_2)}(x_{\beta_2}) \\
                  \vdots & \vdots & ~ & \vdots \\
                  q_0^{(\alpha_k)}(x_{\beta_k}) & q_1^{(\alpha_k)}(x_{\beta_k}) & \dots & q_{k-1}^{(\alpha_k)}(x_{\beta_k}) \\
                \end{array}
              \right).
\end{equation*}
For  $k=1,2,\dots,N$, we define the polynomial
\begin{equation}\label{equa3.2}
p_{k-1}(x):=-\left(\left[\begin{array}{c|cccc}
           0 & q_0(x) & q_1(x) & \dots & q_{k-1}(x) \\ \hline
           y_{\beta_1, \alpha_1} & q_0^{(\alpha_1)}(x_{\beta_1}) & q_1^{(\alpha_1)}(x_{\beta_1}) & \dots & q_{k-1}^{(\alpha_1)}(x_{\beta_1}) \\
            y_{\beta_2, \alpha_2} & q_0^{(\alpha_2)}(x_{\beta_2}) & q_1^{(\alpha_2)}(x_{\beta_2}) & \dots & q_{k-1}^{(\alpha_2)}(x_{\beta_2}) \\
            \vdots & \vdots & \vdots & ~ & \vdots \\
            y_{\beta_k, \alpha_k} & q_0^{(\alpha_k)}(x_{\beta_k}) & q_1^{(\alpha_k)}(x_{\beta_k}) & \dots & q_{k-1}^{(\alpha_k)}(x_{\beta_k}) \\
         \end{array}\right]/V_{k-1}\right).
\end{equation}
It is easy to see that $p_{k-1}(x)$ is the interpolation polynomial corresponding to the first $k$ interpolation conditions in  (\ref{equa2.1}).

For any fixed $k=1,2,\dots,N$, let $g_{-1,k-1}(x)=q_{k-1}(x)$, and $g_{j,k-1}(x)$, $j=0,1,\dots,k-2$ be auxiliary polynomials defined by
\begin{equation}\label{equa3.3}
g_{j,k-1}(x):= \left(\left[\begin{array}{c|cccc}
            q_{k-1}(x) & q_0(x) & q_1(x) & \dots & q_{j}(x) \\ \hline
            q_{k-1}^{(\alpha_1)}(x_{\beta_1}) & q_0^{(\alpha_1)}(x_{\beta_1}) & q_1^{(\alpha_1)}(x_{\beta_1}) & \dots & q_{j}^{(\alpha_1)}(x_{\beta_1}) \\
            q_{k-1}^{(\alpha_2)}(x_{\beta_2}) & q_0^{(\alpha_2)}(x_{\beta_2}) & q_1^{(\alpha_2)}(x_{\beta_2}) & \dots & q_{j}^{(\alpha_2)}(x_{\beta_2}) \\
            \vdots & \vdots & \vdots & ~ & \vdots \\
            q_{k-1}^{(\alpha_{j+1})}(x_{\beta_{j+1}}) & q_0^{(\alpha_{j+1})}(x_{\beta_{j+1}}) & q_1^{(\alpha_{j+1})}(x_{\beta_{j+1}}) & \dots & q_{j}^{(\alpha_{j+1})}(x_{\beta_{j+1}}) \\
            \end{array}\right]/V_{j}\right).
\end{equation}

Inspired by \cite{wen19,wen20},  we present analogous results for our problem and include proofs following its idea for completeness.

\begin{proposition}\label{prop:V.V}
Let $\textup{span}\{q_0(x), q_1(x),\dots,q_{N-1}(x)\}$ be a strongly proper interpolation space for the Birkhoff interpolation problem $(\ref{equa2.1})$. For any fixed $k=1,2,\dots,N$, let $g_{-1,k-1}(x)=q_{k-1}(x)$. Then  
\begin{align*}
    g_{k-2,k-1}^{(\alpha_i)}(x_{\beta_i})=\left\{
    \begin{array}{ll}
      0, & i=1,2,\dots,k-1; \\
      \dfrac{|V_{k-2}|}{|V_{k-1}|}\neq 0, & i=k.
    \end{array}
  \right.
\end{align*}
\end{proposition}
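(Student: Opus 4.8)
The plan is to apply, one at a time, each interpolation functional $L_i := \delta_{x_{\beta_i}}\circ D_x^{\alpha_i}$ ($i=1,\dots,k$) directly to the Schur-complement expression (\ref{equa3.3}) that defines $g_{k-2,k-1}(x)$, and to reduce each resulting quantity to a determinant ratio by means of Propositions \ref{prop:E} and \ref{prop:rank}. The only global ingredient is that, since $\{q_0(x),\dots,q_{N-1}(x)\}$ is a \emph{strongly} proper interpolation basis, every leading principal submatrix $V_j$ ($0\le j\le N-1$) is non-singular: this is Definition \ref{def:8} combined with the equivalence, recorded around (\ref{equa2.3})--(\ref{equa2.4}), between properness of a space for a set of conditions and non-singularity of the associated matrix. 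Hence all Schur complements below are well defined and no denominator vanishes (with the convention $|V_{-1}|=1$ for the edge case $k=1$, where $g_{-1,0}(x)=q_0(x)$).

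In the $k\times k$ matrix occurring in (\ref{equa3.3}) for $j=k-2$, only the first row carries polynomial entries, every other entry being a scalar; partition it as a $2\times 2$ block matrix with bottom-right block $V_{k-2}$. Applying Proposition \ref{prop:E} with the linear operator $E=L_i$ acting on that polynomial first row yields
\[
g_{k-2,k-1}^{(\alpha_i)}(x_{\beta_i})=L_i\bigl(g_{k-2,k-1}(x)\bigr)=\bigl(\widehat{M}_i\,\big/\,V_{k-2}\bigr),
\]
where $\widehat{M}_i$ is the $k\times k$ scalar matrix obtained from the matrix in (\ref{equa3.3}) by replacing its polynomial first row $\bigl(q_{k-1}(x),q_0(x),\dots,q_{k-2}(x)\bigr)$ with $\bigl(q_{k-1}^{(\alpha_i)}(x_{\beta_i}),q_0^{(\alpha_i)}(x_{\beta_i}),\dots,q_{k-2}^{(\alpha_i)}(x_{\beta_i})\bigr)$. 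Since $\bigl(\widehat{M}_i/V_{k-2}\bigr)$ is a $1\times 1$ scalar and $V_{k-2}$ is non-singular, Proposition \ref{prop:rank} gives $g_{k-2,k-1}^{(\alpha_i)}(x_{\beta_i})=|\widehat{M}_i|/|V_{k-2}|$, so it remains only to evaluate $|\widehat{M}_i|$.

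For $i\in\{1,\dots,k-1\}$, the new first row of $\widehat{M}_i$ is exactly the row of its lower block that records the condition of order $\alpha_i$ at $x_{\beta_i}$ (the $(i+1)$-st row of $\widehat{M}_i$); thus $\widehat{M}_i$ has two equal rows, $|\widehat{M}_i|=0$, and $g_{k-2,k-1}^{(\alpha_i)}(x_{\beta_i})=0$. For $i=k$, the rows of $\widehat{M}_k$ are, top to bottom, those for $\alpha_k,\alpha_1,\dots,\alpha_{k-1}$, and its columns, left to right, those for $q_{k-1},q_0,\dots,q_{k-2}$; hence $\widehat{M}_k$ is obtained from the leading principal submatrix $V_{k-1}$ (rows for $\alpha_1,\dots,\alpha_k$, columns for $q_0,\dots,q_{k-1}$) by cyclically moving the last row to the top and the last column to the front. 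Each cyclic shift is a $k$-cycle of sign $(-1)^{k-1}$, so together they give $|\widehat{M}_k|=(-1)^{k-1}(-1)^{k-1}|V_{k-1}|=|V_{k-1}|$, whence $g_{k-2,k-1}^{(\alpha_k)}(x_{\beta_k})=|V_{k-1}|/|V_{k-2}|\neq 0$, a ratio of the two (nonzero) determinants. (Alternatively, the case $i=k$ can be handled via the Sylvester identity, Proposition \ref{prop:identity}, by expressing $\bigl(\widehat{M}_k/V_{k-2}\bigr)$ through nested Schur complements, which trades the permutation count for heavier block bookkeeping.)

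The step I expect to be the main obstacle is this $2\times 2$-block bookkeeping: verifying that the hypotheses of Proposition \ref{prop:E} actually hold (i.e.\ that $L_i$ touches only the polynomial first row and leaves the scalar blocks, in particular $V_{k-2}$, intact), and, for $i=k$, tracking which rows and columns of $\widehat{M}_k$ correspond to which interpolation conditions so that the identification with $V_{k-1}$ and the sign of the governing permutation come out correctly. The remaining computations are routine.
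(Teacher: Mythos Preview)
Your argument is correct and rests on the same two tools (Propositions \ref{prop:E} and \ref{prop:rank}) as the paper, but the organization differs in both cases. For $i\le k-1$ the paper never evaluates $|\widehat{M}_i|$: it instead rewrites $g_{k-2,k-1}(x)=q_{k-1}(x)-\tilde p_{k-1}(x)$, recognizes $\tilde p_{k-1}$ as the interpolant in $\mathrm{span}\{q_0,\dots,q_{k-2}\}$ matching the data $q_{k-1}^{(\alpha_i)}(x_{\beta_i})$, and concludes directly from $\tilde p_{k-1}^{(\alpha_i)}(x_{\beta_i})=q_{k-1}^{(\alpha_i)}(x_{\beta_i})$. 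Your duplicate-row observation is the determinantal shadow of that interpolation fact and is a bit more self-contained. For $i=k$ the paper sidesteps the permutation-sign count by first invoking Proposition \ref{prop:change} to reposition the blocks so that the Schur complement is literally $(V_{k-1}/V_{k-2})$, and then applies Proposition \ref{prop:rank}; this is slightly cleaner than tracking the two $(k-1)$-cycles, though both routes give $|V_{k-1}|/|V_{k-2}|$ (the displayed statement has this ratio inverted; the paper's own proof agrees with yours). What your unified determinant template buys is a single mechanism for all $i$; what the paper's split buys is an interpolation-theoretic reading of the vanishing for $i<k$ and no sign bookkeeping for $i=k$.
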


\newproof{pf}{proof}
\begin{proof}
For any fixed $k=1,2,\dots,N$, we first consider the case $i=1,2,\dots,k-1$. By the definition of $g_{j,k-1}(x)$ in $(\ref{equa3.3})$,
\begin{align*}
g_{k-2,k-1}(x)&=q_{k-1}(x)\\
&~~+ \left(\left[\begin{array}{c|cccc}
            0 & q_0(x) & q_1(x) & \dots & q_{k-2}(x) \\ \hline
            q_{k-1}^{(\alpha_1)}(x_{\beta_1}) & q_0^{(\alpha_1)}(x_{\beta_1}) & q_1^{(\alpha_1)}(x_{\beta_1}) & \dots & q_{k-2}^{(\alpha_1)}(x_{\beta_1}) \\
            q_{k-1}^{(\alpha_2)}(x_{\beta_2}) & q_0^{(\alpha_2)}(x_{\beta_2}) & q_1^{(\alpha_2)}(x_{\beta_2}) & \dots & q_{k-2}^{(\alpha_2)}(x_{\beta_2}) \\
            \vdots & \vdots & \vdots & ~ & \vdots \\
            q_{k-1}^{(\alpha_{k-1})}(x_{\beta_{k-1}}) & q_0^{(\alpha_{k-1})}(x_{\beta_{k-1}}) & q_1^{(\alpha_{k-1})}(x_{\beta_{k-1}}) & \dots & q_{k-2}^{(\alpha_{k-1})}(x_{\beta_{k-1}}) \\
            \end{array}\right]/V_{k-2}\right)\\
         &\triangleq q_{k-1}(x)-\tilde{p}_{k-1}(x),
\end{align*}
and  $\tilde{p}_{k-1}(x)=(q_0(x),q_1(x),\dots,q_{k-2}(x))V_{k-2}^{-1}( q_{k-1}^{(\alpha_1)}(x_{\beta_1}), q_{k-1}^{(\alpha_2)}(x_{\beta_2}),\dots, q_{k-1}^{(\alpha_{k-1})}(x_{\beta_{k-1}}))^\textup{T}$ is precisely the interpolation
polynomial which satisfies the conditions
$$\tilde{p}_{k-1}^{(\alpha_{i})}(x_{\beta_{i}})=q_{k-1}^{(\alpha_i)}(x_{\beta_i}), i=1,2,\dots,k-1.$$
Thus we obtain   $ g_{k-2,k-1}^{(\alpha_i)}(x_{\beta_i})=q_{k-1}^{(\alpha_i)}(x_{\beta_i})-\tilde{p}_{k-1}^{(\alpha_{i})}(x_{\beta_{i}})=0, i=1,2,\dots,k-1.$

Next, we prove the case $i=k$.
The Schur complement of $V_{k-2}$ in $V_{k-1}$ is

\resizebox{\linewidth}{!}{$
\begin{aligned}
    \left(V_{k-1}/V_{k-2}\right) &= \left(\left[
        \begin{array}{cccc|c}
            q_0^{(\alpha_1)}(x_{\beta_1}) & q_1^{(\alpha_1)}(x_{\beta_1}) & \dots & q_{k-2}^{(\alpha_1)}(x_{\beta_1}) & q_{k-1}^{(\alpha_1)}(x_{\beta_1}) \\
            q_0^{(\alpha_2)}(x_{\beta_2}) & q_1^{(\alpha_2)}(x_{\beta_2}) & \dots & q_{k-2}^{(\alpha_2)}(x_{\beta_2}) & q_{k-1}^{(\alpha_2)}(x_{\beta_2}) \\
            \vdots & \vdots & ~ & \vdots & \vdots \\
            q_0^{(\alpha_{k-1})}(x_{\beta_{k-1}}) & q_1^{(\alpha_{k-1})}(x_{\beta_{k-1}}) & \dots & q_{k-2}^{(\alpha_{k-1})}(x_{\beta_{k-1}}) & q_{k-1}^{(\alpha_{k-1})}(x_{\beta_{k-1}}) \\ \hline
            q_0^{(\alpha_{k})}(x_{\beta_{k}}) & q_1^{(\alpha_{k})}(x_{\beta_{k}}) & \dots & q_{k-2}^{(\alpha_{k})}(x_{\beta_{k}}) & q_{k-1}^{(\alpha_{k})}(x_{\beta_{k}}) \\
        \end{array}
    \right]/V_{k-2}\right) \nonumber \\
    &= \left(\left[
        \begin{array}{c|cccc}
            q_{k-1}^{(\alpha_{k})}(x_{\beta_{k}}) & q_0^{(\alpha_{k})}(x_{\beta_{k}}) & q_1^{(\alpha_{k})}(x_{\beta_{k}}) & \dots & q_{k-2}^{(\alpha_{k})}(x_{\beta_{k}}) \\ \hline
             q_{k-1}^{(\alpha_1)}(x_{\beta_1}) & q_0^{(\alpha_1)}(x_{\beta_1}) & q_1^{(\alpha_1)}(x_{\beta_1}) & \dots & q_{k-2}^{(\alpha_1)}(x_{\beta_1}) \\
            q_{k-1}^{(\alpha_2)}(x_{\beta_2}) &  q_0^{(\alpha_2)}(x_{\beta_2}) & q_1^{(\alpha_2)}(x_{\beta_2}) & \dots & q_{k-2}^{(\alpha_2)}(x_{\beta_2}) \\
            \vdots & \vdots & \vdots & ~ & \vdots \\
            q_{k-1}^{(\alpha_{k-1})}(x_{\beta_{k-1}}) &  q_0^{(\alpha_{k-1})}(x_{\beta_{k-1}}) & q_1^{(\alpha_{k-1})}(x_{\beta_{k-1}}) & \dots & q_{k-2}^{(\alpha_{k-1})}(x_{\beta_{k-1}})
        \end{array}\right]/V_{k-2}\right) \nonumber\\
    &=  \delta_{x_{\beta_k}} \circ D_x^{\alpha_k}\left(\left(
        \begin{array}{c|cccc}
            q_{k-1}(x) & q_0(x) & q_1(x) & \dots & q_{k-2}(x) \\ \hline
           q_{k-1}^{(\alpha_1)}(x_{\beta_1}) & q_0^{(\alpha_1)}(x_{\beta_1}) & q_1^{(\alpha_1)}(x_{\beta_1}) & \dots & q_{k-2}^{(\alpha_1)}(x_{\beta_1}) \\
            q_{k-1}^{(\alpha_2)}(x_{\beta_2}) &  q_0^{(\alpha_2)}(x_{\beta_2}) & q_1^{(\alpha_2)}(x_{\beta_2}) & \dots & q_{k-2}^{(\alpha_2)}(x_{\beta_2}) \\
            \vdots & \vdots & \vdots & ~ & \vdots \\
            q_{k-1}^{(\alpha_{k-1})}(x_{\beta_{k-1}}) &  q_0^{(\alpha_{k-1})}(x_{\beta_{k-1}}) & q_1^{(\alpha_{k-1})}(x_{\beta_{k-1}}) & \dots & q_{k-2}^{(\alpha_{k-1})}(x_{\beta_{k-1}})
        \end{array}\right)/V_{k-2}\right) \nonumber \\
    &= \delta_{x_{\beta_k}} \circ D_x^{\alpha_k}(g_{k-2,k-1}(x))\nonumber \\
    & = g_{k-2,k-1}^{(\alpha_k)}(x_{\beta_k}).\nonumber
\end{aligned}
$}
The second equality follows directly from Proposition \ref{prop:change}, and the third equality is due to Proposition \ref{prop:E}. From  Proposition \ref{prop:rank} we know that
\begin{displaymath}
g_{k-2,k-1}^{(\alpha_k)}(x_{\beta_k})=(V_{k-1}/V_{k-2})=|(V_{k-1}/V_{k-2})|=\frac{|V_{k-1}|}{|V_{k-2}|}.
\end{displaymath}
Since $\{q_0(x), q_1(x),\dots,q_{N-1}(x)\}$ is a strongly proper interpolation basis, it  follows that $|V_{k-1}|\neq 0, |V_{k-2}|\neq 0$.  This completes the proof of the proposition.
\end{proof}

\begin{theorem}\label{TH7}
Let $\textup{span}\{q_0(x), q_1(x),\dots,q_{N-1}(x)\}$ be a strongly proper interpolation space for the Birkhoff interpolation problem $(\ref{equa2.1})$.
$g_{j,k-1}(x)$, $k=1,2,\dots,N, j=0,1,\dots,k-2$ are defined by $(\ref{equa3.3})$.
Let $p_{-1}(x)=0$, then the interpolation polynomial 
\begin{equation*}
p_{k-1}(x)=p_{k-2}(x)+\frac{y_{\beta_k,\alpha_k}-p_{k-2}^{(\alpha_k)}(x_{\beta_k})}{g_{k-2,k-1}^{(\alpha_k)}(x_{\beta_k})}g_{k-2,k-1}(x),~~k=1, 2, \dots, N.
\end{equation*}
\end{theorem}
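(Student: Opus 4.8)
The plan is to read off the recursion directly from the Schur-complement definition (\ref{equa3.2}) of $p_{k-1}(x)$ by invoking the Sylvester identity (Proposition \ref{prop:identity}); the nonvanishing of the denominator $g_{k-2,k-1}^{(\alpha_k)}(x_{\beta_k})$ is exactly Proposition \ref{prop:V.V}. First I would dispose of the base case $k=1$: here $p_{-1}(x)=0$ and $g_{-1,0}(x)=q_0(x)$, while (\ref{equa3.2}) collapses at once to $p_0(x)=\dfrac{y_{\beta_1,\alpha_1}}{q_0^{(\alpha_1)}(x_{\beta_1})}q_0(x)$, which is the claimed formula (note $q_0^{(\alpha_1)}(x_{\beta_1})=|V_0|\neq0$ by strong properness).

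For $k\ge2$, let $K$ denote the $(k+1)\times(k+1)$ matrix appearing in (\ref{equa3.2}), and apply the Sylvester identity with $M=V_{k-1}$ and its leading principal submatrix $A=V_{k-2}$; in the notation of Proposition \ref{prop:identity} this forces $E=0$, $F=(q_0(x),\dots,q_{k-2}(x))$, $G=q_{k-1}(x)$, $H=(y_{\beta_1,\alpha_1},\dots,y_{\beta_{k-1},\alpha_{k-1}})^{\textup T}$, $L=y_{\beta_k,\alpha_k}$, and $B,C,D$ the remaining blocks of $V_{k-1}$. The four ingredients of (\ref{equa2.10}) are then identified as follows: $\left(\left(\begin{smallmatrix}E&F\\ H&A\end{smallmatrix}\right)/A\right)=-p_{k-2}(x)$ is literally (\ref{equa3.2}) with $k$ replaced by $k-1$; $\left(\left(\begin{smallmatrix}F&G\\ A&B\end{smallmatrix}\right)/A\right)=g_{k-2,k-1}(x)$ after a block-column swap justified by Proposition \ref{prop:change}, matching (\ref{equa3.3}) with $j=k-2$; $\left(\left(\begin{smallmatrix}H&A\\ L&C\end{smallmatrix}\right)/A\right)=y_{\beta_k,\alpha_k}-C V_{k-2}^{-1}H=y_{\beta_k,\alpha_k}-p_{k-2}^{(\alpha_k)}(x_{\beta_k})$, using that $C$ is $\delta_{x_{\beta_k}}\circ D_x^{\alpha_k}$ applied entrywise to $(q_0(x),\dots,q_{k-2}(x))$ while $p_{k-2}(x)=(q_0(x),\dots,q_{k-2}(x))V_{k-2}^{-1}H$; and $(M/A)=(V_{k-1}/V_{k-2})=g_{k-2,k-1}^{(\alpha_k)}(x_{\beta_k})$, which is precisely the computation carried out inside the proof of Proposition \ref{prop:V.V}. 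Substituting these into (\ref{equa2.10}) and using $p_{k-1}(x)=-(K/M)$ yields the stated identity.

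The main obstacle is purely combinatorial bookkeeping: one must partition $K$, $M=V_{k-1}$ and $A=V_{k-2}$ so that, after the row/column rearrangements permitted by Propositions \ref{prop:change} and \ref{prop:E}, each of the four blocks produced by the Sylvester identity is recognized as one of the already-defined quantities $p_{k-2}(x)$, $g_{k-2,k-1}(x)$, $p_{k-2}^{(\alpha_k)}(x_{\beta_k})$, $g_{k-2,k-1}^{(\alpha_k)}(x_{\beta_k})$; once this dictionary is fixed the rest is immediate. As an alternative (and a useful sanity check) one may bypass the Sylvester identity entirely: setting $r_{k-1}(x)$ equal to the right-hand side of the asserted formula, one has $r_{k-1}(x)\in\textup{span}\{q_0(x),\dots,q_{k-1}(x)\}$ since $p_{k-2}(x)\in\textup{span}\{q_0(x),\dots,q_{k-2}(x)\}$ and $g_{k-2,k-1}(x)=q_{k-1}(x)-\tilde p_{k-1}(x)\in\textup{span}\{q_0(x),\dots,q_{k-1}(x)\}$ (as in the proof of Proposition \ref{prop:V.V}); Proposition \ref{prop:V.V} gives $g_{k-2,k-1}^{(\alpha_i)}(x_{\beta_i})=0$ for $i<k$, whence $r_{k-1}^{(\alpha_i)}(x_{\beta_i})=p_{k-2}^{(\alpha_i)}(x_{\beta_i})=y_{\beta_i,\alpha_i}$ for $i\le k-1$ and, directly, $r_{k-1}^{(\alpha_k)}(x_{\beta_k})=y_{\beta_k,\alpha_k}$; thus $r_{k-1}(x)$ interpolates the first $k$ conditions inside the $k$-dimensional proper space $\textup{span}\{q_0(x),\dots,q_{k-1}(x)\}$, and uniqueness forces $r_{k-1}(x)=p_{k-1}(x)$.
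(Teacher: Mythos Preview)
Your primary argument is essentially identical to the paper's: you partition the matrix in (\ref{equa3.2}) into a $3\times 3$ block form with $A=V_{k-2}$ and $M=V_{k-1}$, apply the Sylvester identity (Proposition~\ref{prop:identity}), and then identify each of the four resulting Schur complements with $p_{k-2}(x)$, $g_{k-2,k-1}(x)$, $(V_{k-1}/V_{k-2})^{-1}=1/g_{k-2,k-1}^{(\alpha_k)}(x_{\beta_k})$, and $y_{\beta_k,\alpha_k}-p_{k-2}^{(\alpha_k)}(x_{\beta_k})$ via Propositions~\ref{prop:change}, \ref{prop:E} and \ref{prop:V.V}---exactly as the paper does. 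Your explicit treatment of the base case $k=1$ is a small addition the paper omits.

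Your alternative argument at the end, however, is genuinely different and more elementary than anything in the paper: rather than manipulating Schur complements, you simply verify that the right-hand side lies in $\textup{span}\{q_0,\dots,q_{k-1}\}$ and satisfies all $k$ interpolation conditions (using Proposition~\ref{prop:V.V} for the vanishing of $g_{k-2,k-1}$ on the first $k-1$ functionals), then invoke uniqueness from strong properness. This route bypasses the Sylvester identity entirely and is arguably cleaner for this particular theorem; what the paper's approach buys is a uniform Schur-complement machinery that also drives the companion recursion for $g_{j,k-1}$ in Theorem~\ref{gx}.
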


\begin{proof}
First we will write

\resizebox{\linewidth}{!}{$
\begin{aligned}
p_{k-1}(x)&=-\left(\left[\begin{array}{c|cccc|c}
           0 & q_0(x) & q_1(x) & \dots & q_{k-2}(x) & q_{k-1}(x) \\ \hline
           y_{\beta_1,\alpha_1} & q_0^{(\alpha_1)}(x_{\beta_1}) & q_1^{(\alpha_1)}(x_{\beta_1}) & \dots & q_{k-2}^{(\alpha_1)}(x_{\beta_1}) & q_{k-1}^{(\alpha_1)}(x_{\beta_1}) \\
           y_{\beta_2,\alpha_2} & q_0^{(\alpha_2)}(x_{\beta_2}) & q_1^{(\alpha_2)}(x_{\beta_2}) & \dots & q_{k-2}^{(\alpha_2)}(x_{\beta_2}) & q_{k-1}^{(\alpha_2)}(x_{\beta_2}) \\
           \vdots & \vdots & \vdots & ~ & \vdots & \vdots \\
           y_{\beta_{k-1},\alpha_{k-1}} & q_0^{(\alpha_{k-1})}(x_{\beta_{k-1}}) & q_1^{(\alpha_{k-1})}(x_{\beta_{k-1}}) & \dots & q_{k-2}^{(\alpha_{k-1})}(x_{\beta_{k-1}}) & q_{k-1}^{(\alpha_{k-1})}(x_{\beta_{k-1}}) \\ \hline
           y_{\beta_k,\alpha_k} & q_0^{(\alpha_k)} (x_{\beta_k})& q_1^{(\alpha_k)}(x_{\beta_k}) & \dots & q_{k-2}^{(\alpha_k)}(x_{\beta_k}) & q_{k-1}^{(\alpha_k)}(x_{\beta_k}) \\
         \end{array}\right]/V_{k-1}\right)\\
         &\triangleq-\left(\left[\begin{array}{ccc}
0 & Q_{k-2}(x) & q_{k-1}(x)\\
Y_{k-1} & V_{k-2} & B\\
y_{\beta_{k},\alpha_{k}} &  Q_{k-2}^{\alpha_{k}}(x_{\beta_{k}}) & q_{k-1}^{(\alpha_k)}(x_{\beta_k})\\
\end{array}\right]/\left(
                     \begin{array}{cc}
                        V_{k-2}  & B \\
                        Q_{k-2}^{\alpha_{k}}(x_{\beta_{k}}) & q_{k-1}^{(\alpha_k)}(x_{\beta_k}) \\
                     \end{array}
                   \right)
\right).
\end{aligned}
$}
Then  Proposition \ref{prop:identity} shows that $p_{k-1}(x)$ can be written as
\begin{align*}
p_{k-1}(x)&=-\left(\left(
                       \begin{array}{cc}
                         0 & Q_{k-2}(x) \\
                         Y_{k-1} & V_{k-2} \\
                       \end{array}
                     \right)
/V_{k-2}\right)+\left(\left(
       \begin{array}{cc}
         Q_{k-2}(x) & q_{k-1}(x) \\
         V_{k-2} & B \\
       \end{array}
     \right)
/V_{k-2}\right)\\
&~~~~~~~~\left(\left(
            \begin{array}{cc}
              V_{k-2} & B \\
              Q_{k-2}^{\alpha_{k}}(x_{\beta_{k}}) &  q_{k-1}^{(\alpha_k)}(x_{\beta_k})\\
            \end{array}
          \right)
/V_{k-2}\right)^{-1}
\left(\left(
                \begin{array}{cc}
                  Y_{k-1} & V_{k-2} \\
                  y_{\beta_{k},\alpha_{k}} & Q_{k-2}^{\alpha_{k}}(x_{\beta_{k}})  \\
                \end{array}
              \right)
/V_{k-2}\right).
\end{align*}
On the other hand, it is easy to see that
\begin{align*}
-\left(\left(
                       \begin{array}{cc}
                         0 & Q_{k-2}(x) \\
                         Y_{k-1} & V_{k-2} \\
                       \end{array}
                     \right)
/V_{k-2}\right)= p_{k-2}(x);
\end{align*}
by Proposition \ref{prop:change} and Proposition \ref{prop:V.V} we know that 
\begin{align*}
&\left(\left(
       \begin{array}{cc}
         Q_{k-2}(x) & q_{k-1}(x) \\
         V_{k-2} & B \\
       \end{array}
     \right)
/V_{k-2}\right)
=\left(\left(
       \begin{array}{cc}
         q_{k-1}(x) & Q_{k-2}(x) \\
         B & V_{k-2} \\
       \end{array}
     \right)
/V_{k-2}\right)
=g_{k-2,k-1}(x);\\
&   \left(\left(
            \begin{array}{cc}
              V_{k-2} & B \\
              Q_{k-2}^{\alpha_{k}}(x_{\beta_{k}}) & q_{k-1}^{(\alpha_k)}(x_{\beta_k}) \\
            \end{array}
          \right)
/V_{k-2}\right)^{-1}
= \left(V_{k-1}/V_{k - 2}\right)^{-1}
=\dfrac{1}{g_{k-2,k-1}^{(\alpha_k)}(x_{\beta_k})}.
\end{align*}
Applying Propositon \ref{prop:change} and Proposition \ref{prop:E} , we can obtain
\begin{align*}
    \left(\left(
                \begin{array}{cc}
                  Y_{k-1} & V_{k-2} \\
                  y_{\beta_{k},\alpha_{k}} & Q_{k-2}^{\alpha_{k}}(x_{\beta_{k}}) \\
                \end{array}
              \right)
/V_{k-2}\right)
&=\left(\left(
                \begin{array}{cc}
                  y_{\beta_{k},\alpha_{k}}& Q_{k-2}^{\alpha_{k}}(x_{\beta_{k}}) \\
                  Y_{k-1} & V_{k-2} \\
                \end{array}
              \right)
/V_{k-2}\right)\\
&=y_{\beta_{k},\alpha_{k}}-Q_{k-2}^{\alpha_{k}}(x_{\beta_{k}})V_{k-2}^{-1}Y_{k-1}\\
&= y_{\beta_k,\alpha_k} - p_{k-2}^{(\alpha_k)}(x_{\beta_k}).
\end{align*}

Substituting the above equations into the expression for $p_{k-1}(x)$ yields the conclusion of the theorem.
\end{proof}

\begin{theorem}\label{gx} 
 Under the same assumptions as Theorem $\ref{TH7}$, for any fixed $k=1,2,\dots,N$,
\begin{equation*}
g_{j,k-1}(x)=g_{j-1,k-1}(x)-\frac{g_{j-1,k-1}^{(\alpha_{j+1})}(x_{\beta_{j+1}})}{g_{j-1,j}^{(\alpha_{j+1})}(x_{\beta_{j+1}})}g_{j-1,j}(x),~ j=0, 1, \dots, k-2.
\end{equation*}
\end{theorem}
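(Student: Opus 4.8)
The plan is to apply the Sylvester identity (Proposition~\ref{prop:identity}) to the Schur-complement representation $(\ref{equa3.3})$ of $g_{j,k-1}(x)$, peeling off simultaneously the last interpolation condition (the row carrying $\delta_{x_{\beta_{j+1}}}\circ D_x^{\alpha_{j+1}}$) and the last basis polynomial $q_j$. The first step is to notice that the $(j+2)\times(j+2)$ matrix in $(\ref{equa3.3})$ is, under the block partition of row/column sizes $(1,j,1)$ by $(1,j,1)$, already displayed in the form $K=\left(\begin{smallmatrix}E&F&G\\ H&A&B\\ L&C&D\end{smallmatrix}\right)$ of Proposition~\ref{prop:identity}: here $A=V_{j-1}$, $M=\left(\begin{smallmatrix}A&B\\ C&D\end{smallmatrix}\right)=V_j$, $E=q_{k-1}(x)$, $F=(q_0(x),\dots,q_{j-1}(x))$, $G=q_j(x)$, $H=\bigl(q_{k-1}^{(\alpha_1)}(x_{\beta_1}),\dots,q_{k-1}^{(\alpha_j)}(x_{\beta_j})\bigr)^{\textup T}$, $L=q_{k-1}^{(\alpha_{j+1})}(x_{\beta_{j+1}})$, and hence $(K/M)=g_{j,k-1}(x)$ by definition. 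Because $\{q_0(x),\dots,q_{N-1}(x)\}$ is a strongly proper interpolation basis, $|V_{j-1}|\neq 0$ and $|V_j|\neq 0$, so $A$ and $M$ are non-singular and Proposition~\ref{prop:identity} applies.

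The second step is to identify the three Schur complements produced by the identity. The factor $\left(\left(\begin{smallmatrix}E&F\\ H&A\end{smallmatrix}\right)/A\right)$ is, by inspection, exactly the matrix $(\ref{equa3.3})$ defining $g_{j-1,k-1}(x)$, so it equals $g_{j-1,k-1}(x)$. Swapping the two block columns of $\left(\begin{smallmatrix}F&G\\ A&B\end{smallmatrix}\right)$ via Proposition~\ref{prop:change} turns it into the matrix defining $g_{j-1,j}(x)$, so $\left(\left(\begin{smallmatrix}F&G\\ A&B\end{smallmatrix}\right)/A\right)=g_{j-1,j}(x)$. For the remaining factor, swapping the two block rows of $\left(\begin{smallmatrix}H&A\\ L&C\end{smallmatrix}\right)$ via Proposition~\ref{prop:change} gives a matrix obtained from the matrix $(\ref{equa3.3})$ for $g_{j-1,k-1}(x)$ by applying $\delta_{x_{\beta_{j+1}}}\circ D_x^{\alpha_{j+1}}$ to its (polynomial) first row; Proposition~\ref{prop:E} then yields $\left(\left(\begin{smallmatrix}H&A\\ L&C\end{smallmatrix}\right)/A\right)=\delta_{x_{\beta_{j+1}}}\circ D_x^{\alpha_{j+1}}\bigl(g_{j-1,k-1}(x)\bigr)=g_{j-1,k-1}^{(\alpha_{j+1})}(x_{\beta_{j+1}})$. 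Finally $(M/A)=(V_j/V_{j-1})$, which equals $g_{j-1,j}^{(\alpha_{j+1})}(x_{\beta_{j+1}})$ — this is precisely the identity established inside the proof of Proposition~\ref{prop:V.V} with $k$ replaced by $j+1$ — and it is non-zero by strong properness, so $(M/A)^{-1}=1/g_{j-1,j}^{(\alpha_{j+1})}(x_{\beta_{j+1}})$. Substituting these four quantities into Proposition~\ref{prop:identity} gives exactly the asserted recursion.

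I do not expect any single computation to be hard; the obstacle is the block bookkeeping — correctly matching rows and columns of $(\ref{equa3.3})$ to the roles $E,F,G,H,L$ and $A,M$ in Proposition~\ref{prop:identity} after the column/row reorderings, and in particular the one step with genuine content, namely recognising the third factor as the derivative value $g_{j-1,k-1}^{(\alpha_{j+1})}(x_{\beta_{j+1}})$ by pulling the functional $\delta_{x_{\beta_{j+1}}}\circ D_x^{\alpha_{j+1}}$ out through Proposition~\ref{prop:E}, exactly as in the proof of Proposition~\ref{prop:V.V}. One also has to treat the base case $j=0$ separately, where $A=V_{-1}$ is the empty $0\times 0$ block (with $|V_{-1}|:=1$), $g_{-1,k-1}(x)=q_{k-1}(x)$ and $g_{-1,0}(x)=q_0(x)$; there the stated formula can be read off directly from $(\ref{equa3.3})$ with $j=0$, or obtained from the same Sylvester computation under the empty-block convention.
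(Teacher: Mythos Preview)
Your proposal is correct and follows essentially the same approach as the paper: the paper writes $g_{j,k-1}(x)$ in the $3\times 3$ block form with inner block $V_{j-1}$ and denominator $V_j$, then says ``the conclusion follows analogously to the proof of the previous theorem,'' i.e.\ by the same Sylvester-identity computation you carry out explicitly. Your identification of the four pieces via Propositions~\ref{prop:change}, \ref{prop:E}, and \ref{prop:V.V} matches the paper's implicit argument, and your explicit treatment of the base case $j=0$ is a welcome addition that the paper omits.
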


\begin{proof}
For any fixed $k=1,2,\dots,N$, note that $g_{j,k-1}(x), j=0,1,\dots,k-2$ can be written as  follows:

\resizebox{\linewidth}{!}{$
\begin{aligned}
g_{j,k-1}(x)
&=\left(\left[\begin{array}{c|cccc|c}
         q_{k-1}(x) & q_0(x) & q_1(x) & \dots & q_{j-1}(x) & q_{j}(x) \\ \hline
        q_{k-1}^{(\alpha_1)}(x_{\beta_1}) & q_0^{(\alpha_1)}(x_{\beta_1}) & q_1^{(\alpha_1)}(x_{\beta_1}) & \dots & q_{j-1}^{(\alpha_1)}(x_{\beta_1}) & q_{j}^{(\alpha_1)}(x_{\beta_1}) \\
        q_{k-1}^{(\alpha_2)}(x_{\beta_2}) & q_0^{(\alpha_2)}(x_{\beta_2}) & q_1^{(\alpha_2)}(x_{\beta_2}) & \dots & q_{j-1}^{(\alpha_2)}(x_{\beta_2}) & q_{j}^{(\alpha_2)}(x_{\beta_2}) \\
        \vdots & \vdots & \vdots & ~ & \vdots & \vdots \\
        q_{k-1}^{(\alpha_j)}(x_{\beta_j}) & q_0^{(\alpha_j)}(x_{\beta_j}) & q_1^{(\alpha_j)}(x_{\beta_j}) & \dots & q_{j-1}^{(\alpha_j)}(x_{\beta_j}) & q_{j}^{(\alpha_j)}(x_{\beta_j}) \\ \hline
        q_{k-1}^{(\alpha_{j+1})}(x_{\beta_{j+1}}) & q_0^{(\alpha_{j+1})}(x_{\beta_{j+1}}) & q_1^{(\alpha_{j+1})}(x_{\beta_{j+1}}) & \dots & q_{j-1}^{(\alpha_{j+1})}(x_{\beta_{j+1}}) & q_{j}^{(\alpha_{j+1})}(x_{\beta_{j+1}})\\
        \end{array}\right]/V_{j}\right)\\
&\triangleq-\left(\left[\begin{array}{ccc}
q_{k-1}(x) & Q_{j-1}(x) & q_{j}(x)\\
\tilde{B_{1}} & V_{j-1} & \tilde{B}\\
q_{k-1}^{(\alpha_{j+1})}(x_{\beta_{j+1}}) &  Q_{j-1}^{\alpha_{j+1}}(x_{\beta_{j+1}}) & q_{j}^{(\alpha_j+1)}(x_{\beta_{j+1}})\\
\end{array}\right]/\left(
                     \begin{array}{cc}
                         V_{j-1} & \tilde{B} \\
                       Q_{j-1}^{\alpha_{j+1}}(x_{\beta_{j+1}}) & q_{j}^{(\alpha_j+1)}(x_{\beta_{j+1}})\\
                     \end{array}
                   \right)
\right).
\end{aligned}
$}
The conclusion follows analogously to the proof of the previous theorem.
\end{proof}

\section{Algorithms for computing Newton-type basis and interpolation polynomial}\label{sec4}

\subsection{Algorithm 1}
Under the conditions of Proposition \ref{prop:V.V} and Theorem \ref{TH7}, we know that $\{g_{k-2,k-1}(x),k=1,2,\dots,N\}$
defined by equation $(\ref{equa3.3})$ is the desired Newton-type basis, and the polynomial $p_{N-1}(x)$ defined by equation $(\ref{equa3.2})$ is the  interpolation polynomial  for the interpolation conditions specified in (\ref{equa2.1}). 
Therefore, the central task now is to determine a set of strongly proper basis $\{q_{0}(x),q_{1}(x),\dots,q_{N-1}(x)\}$. The algorithms below incorporate a core judgment criterion at each iterative step (see line 18 of Algorithm 1 and line 17 of Algorithm 2) to ensure this condition holds.
In the following algorithm, each $g_{-1,i}(x),i=0,1,\dots,N-1$ is updated at every iteration if it fails to  satisfy the judgment criterion, and the final output set 
$\{g_{-1,i}(x),i=0,1,\dots,N-1 \}$ corresponds precisely to the strongly proper basis $\{q_0(x),q_1(x),\dots,q_{N-1}(x)\}$.

\begin{algorithm}
	\caption{~}
	\label{alg:revredbasis}
	\begin{algorithmic}[1]
    \State{\textbf{Input}}:
    \State{The Birkhoff interpolation conditions   $\delta_{x_{\beta_1}}\circ D_x^{\alpha_1},\delta_{x_{\beta_2}}\circ D_x^{\alpha_2},\dots,\delta_{x_{\beta_N}}\circ D_x^{\alpha_N},$\\
    where $T = [(\beta_1, \alpha_1), (\beta_2, \alpha_2), \dots, (\beta_N, \alpha_N)]$ is an  $N-$DOS.\\ 
    The interpolation values $y_{\beta_1,\alpha_1}, y_{\beta_2,\alpha_2},\dots, y_{\beta_N,\alpha_N}$}.
    \State{\textbf{Output}}: 
    \State{The monomial basis $\{g_{-1,0}(x),g_{-1,1}(x),\dots,g_{-1,N-1}(x) \}.$\\The Newton-type basis $\{g_{-1,0}(x),g_{0,1}(x),\dots,g_{N-2,N-1}(x) \}$. \\ The interpolation polynomial $p_{N-1}(x)$}.
    \State{ $//$ Initialization:\\
     $g_{-1,i}(x):=x^{\alpha_1+i}, i=0,1, \dots, N-1$,\\
    $p_{-1}(x):=0$,
    $p_{0}(x):=p_{-1}(x)+\dfrac {y_{\beta_1,\alpha_1}-p_{-1}^{(\alpha_1)}(x_{\beta_1})}{g_{-1,0}^{(\alpha_1)}(x_{\beta_1})}g_{-1,0}(x)=\frac{y_{\beta_1,\alpha_1}}{\alpha _1!}x^{\alpha _1}$,
    
    }
    \State{List:=$\{2,3,\dots,N\}$}.
    \State{$\rm{\textbf{while\ }} List \neq \varnothing \rm{\textbf{\ do}}$}
       \State{\hspace{1cm}$k$:=Min(List)};
       \State{\hspace{1cm}\rm{\textbf{for}\ $j=0,1,\dots,k-2$}}
          \State{$\hspace{2cm}g_{j,k-1}(x):=g_{j-1,k-1}(x)-\dfrac{g_{j-1,k-1}^{(\alpha_{j+1})}(x_{\beta_{j+1}})}{g_{j-1,j}^{(\alpha_{j+1})}(x_{\beta_{j+1}})}g_{j-1,j}(x)$}
       \State{$\hspace{1cm}{\rm{\textbf{end}}}\hspace{2mm} j$}
       \State{$\hspace{1cm}{\rm{\textbf{if}}}\ g_{k-2,k-1}^{(\alpha_k)}(x_{\beta_k})\neq 0$}
          \State{\hspace{2cm}{List:=List}$-\{k\}$}
    \State{\hspace{2cm}$p_{k-1}(x):=p_{k-2}(x)+\dfrac {y_{\beta_k,\alpha_k}-p_{k-2}^{(\alpha_k)}(x_{\beta_k})}{g_{k-2,k-1}^{(\alpha_k)}(x_{\beta_k})}g_{k-2,k-1}(x)$}
       \State{\hspace{1cm}{\rm{\textbf{else}}}}
          \State{$\hspace{2cm}{\rm{\textbf{for}}}\ d=k-1, k,\dots,N-1$}
             \State{\hspace{3cm}$g_{-1,d}(x):=x*g_{-1,d}(x)$}
          \State{$\hspace{2cm}{\rm{\textbf{end}}}\hspace{2mm} d $}
       \State{\hspace{1cm}{\rm{\textbf{end\ if}}}}
    \State{\rm{\textbf{end\ while}}}
	\end{algorithmic}
\end{algorithm}
\newpage
From Theorem $\ref{zhongzhi}$ we know   that the proposed  algorithm terminates. The following result shows its correctness.
\begin{theorem}
 The output  $\{g_{-1,0}(x), g_{0,1}(x), \dots, g_{N-2,N-1}(x)\}$ is a Newton-type   basis for the Birkhoff interpolation problem $(\ref{equa2.1})$. Furthermore, it is  a set of strongly  proper interpolation basis, and $p_{N-1}(x)$ is the corresponding interpolation polynomial.
\end{theorem}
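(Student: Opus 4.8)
The plan is to argue by induction on the integers removed from \texttt{List} (equivalently, on the successful iterations of the \textbf{while} loop), carrying the following invariant: \emph{at the instant $k$ is removed from} \texttt{List}, \emph{the polynomials $g_{-1,0}(x),\dots,g_{-1,k-1}(x)$ equal monomials $x^{d_0},\dots,x^{d_{k-1}}$ with $\alpha_1=d_0<d_1<\dots<d_{k-1}$; the matrix $V_{k-1}$ formed from them against the first $k$ functionals $L_i:=\delta_{x_{\beta_i}}\circ D_x^{\alpha_i}$ satisfies $|V_0|\neq 0,\dots,|V_{k-1}|\neq 0$, so $\textup{span}\{g_{-1,0},\dots,g_{-1,k-1}\}$ is a strongly proper interpolation space for $L_1,\dots,L_k$; the polynomials $g_{-1,0}(x),g_{0,1}(x),\dots,g_{k-2,k-1}(x)$ coincide with the Schur-complement expressions $(\ref{equa3.3})$, each $g_{m-1,m}(x)$ equals $x^{d_m}$ plus a combination of $x^{d_0},\dots,x^{d_{m-1}}$ and satisfies $g_{m-1,m}^{(\alpha_i)}(x_{\beta_i})=0$ for $1\le i\le m$; and $p_{k-1}(x)\in\textup{span}\{g_{-1,0},\dots,g_{-1,k-1}\}$ is the interpolation polynomial for $L_1,\dots,L_k$.} The base case $k=1$ is exactly the initialization block of Algorithm \ref{alg:revredbasis1}: $g_{-1,0}(x)=x^{\alpha_1}$ gives $|V_0|=\alpha_1!\neq 0$, and $p_0(x)=\frac{y_{\beta_1,\alpha_1}}{\alpha_1!}x^{\alpha_1}$ satisfies $L_1(p_0)=y_{\beta_1,\alpha_1}$.

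For the inductive step I would take $k$ to be the current minimum of \texttt{List}, assume the invariant for all smaller (already removed) indices, and first check that the \textbf{for} loop genuinely produces the polynomials $(\ref{equa3.3})$ for $j=0,\dots,k-2$: its recursion is the one established in Theorem \ref{gx}, and every denominator $g_{j-1,j}^{(\alpha_{j+1})}(x_{\beta_{j+1}})$ that occurs equals $|V_j|/|V_{j-1}|$ (with $|V_{-1}|:=1$) by Proposition \ref{prop:V.V}, hence is nonzero — and, importantly, this uses only $|V_0|,\dots,|V_{k-2}|\neq 0$, which the invariant supplies, and \emph{not} $|V_{k-1}|\neq 0$. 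Re-running the determinant computation from the proof of Proposition \ref{prop:V.V} (which likewise needs only $|V_{k-2}|\neq 0$) gives $g_{k-2,k-1}^{(\alpha_k)}(x_{\beta_k})=|V_{k-1}|/|V_{k-2}|$, so the judgment condition $g_{k-2,k-1}^{(\alpha_k)}(x_{\beta_k})\neq 0$ tested by the algorithm holds \emph{exactly} when $\textup{span}\{g_{-1,0},\dots,g_{-1,k-1}\}$ (current values) is strongly proper for $L_1,\dots,L_k$. If it fails, the exponents $d_{k-1},d_k,\dots$ are each incremented — which preserves $d_{k-2}<d_{k-1}$, since $d_{k-2}$ is already fixed — and the same $k$ is retried; termination, already granted before the theorem via Theorem \ref{zhongzhi}, ensures the test eventually succeeds. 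When it does, Proposition \ref{prop:V.V} gives $g_{k-2,k-1}^{(\alpha_i)}(x_{\beta_i})=0$ for $i=1,\dots,k-1$, so the update $p_{k-1}(x)=p_{k-2}(x)+\frac{y_{\beta_k,\alpha_k}-p_{k-2}^{(\alpha_k)}(x_{\beta_k})}{g_{k-2,k-1}^{(\alpha_k)}(x_{\beta_k})}\,g_{k-2,k-1}(x)$ of Theorem \ref{TH7} keeps the first $k-1$ conditions satisfied and, by a one-line evaluation, also forces $L_k(p_{k-1})=y_{\beta_k,\alpha_k}$; moreover $p_{k-1}\in\textup{span}\{g_{-1,0},\dots,g_{-1,k-1}\}$ because $p_{k-2}$ is (by induction) and $g_{k-2,k-1}$ is $x^{d_{k-1}}$ plus a combination of $x^{d_0},\dots,x^{d_{k-2}}$ by $(\ref{equa3.3})$. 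Since each $g_{m-1,m}$ has the analogous triangular shape, $\textup{span}\{g_{-1,0},\dots,g_{m-1,m}\}=\textup{span}\{x^{d_0},\dots,x^{d_m}\}$ for every $m\le k-1$, so the Newton-type tuple generates the same flag of subspaces as the monomial tuple and inherits its strong properness. This re-establishes the invariant at $k$.

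To finish, I would observe that once $k$ is removed it is never again the minimum of \texttt{List}, and every later degree-bump loop (run for a new minimum $k'>k$) touches only indices $k'-1\ge k$ and above; hence $g_{-1,0},\dots,g_{-1,k-1}$ — and with them $g_{-1,0},g_{0,1},\dots,g_{k-2,k-1}$ and $p_{k-1}$ — never change after $k$ leaves \texttt{List}, which is precisely what makes ``the value at the instant $k$ is removed'' well defined and keeps the hypotheses on the earlier $p$'s consistent throughout the induction. Taking $k=N$ (at termination \texttt{List} is empty, so $N$ has been removed): the monomial output $\{g_{-1,0}(x),\dots,g_{-1,N-1}(x)\}$ spans a strongly proper interpolation space for all $N$ conditions; the tuple $\{g_{-1,0}(x),g_{0,1}(x),\dots,g_{N-2,N-1}(x)\}$ is triangular for $L_1,\dots,L_N$ (we have $g_{m-1,m}^{(\alpha_i)}(x_{\beta_i})=0$ for $i\le m$ and $g_{m-1,m}^{(\alpha_{m+1})}(x_{\beta_{m+1}})\neq 0$) and is strongly proper by the flag identity just noted; and $p_{N-1}(x)$ satisfies all $N$ conditions while lying in the $N$-dimensional proper space it generates, so by the uniqueness of the interpolant in a proper interpolation space, $p_{N-1}(x)$ is that interpolant.

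I expect the real work to be organizational rather than conceptual: one must keep a clean ledger of which $g_{-1,d}$ are frozen and which are still being lifted in degree, and must be careful that the Section \ref{sec3} identities — stated there under the blanket hypothesis that $\textup{span}\{q_0,\dots,q_{N-1}\}$ is strongly proper — are only ever invoked at stage $k$ with the weaker information $|V_0|,\dots,|V_{k-2}|\neq 0$ that is actually available, which is exactly what makes the judgment condition on $g_{k-2,k-1}^{(\alpha_k)}(x_{\beta_k})$ meaningful \emph{before} it is tested. A minor point worth a remark is that the definition of a triangular sequence recalled before Theorem \ref{zhongzhi} asks $L_i(g_i)=1$, whereas the algorithm's $g_{m-1,m}$ only satisfy $L_{m+1}(g_{m-1,m})\neq 0$; dividing each by $g_{m-1,m}^{(\alpha_{m+1})}(x_{\beta_{m+1}})$ recovers a genuine triangular sequence if that normalization is desired.
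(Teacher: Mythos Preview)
Your proposal is correct and follows essentially the same inductive approach as the paper: both argue by induction on $k$, use the Schur-complement machinery of Section~\ref{sec3} (Proposition~\ref{prop:V.V}, Theorems~\ref{TH7} and~\ref{gx}) to identify the algorithm's $g_{j,k-1}$ with the expressions $(\ref{equa3.3})$, read the judgment condition as $|V_{k-1}|/|V_{k-2}|\neq 0$, and conclude that the monomial and Newton-type tuples span the same flag. Your write-up is somewhat more explicit than the paper's on two points --- you spell out that the Section~\ref{sec3} identities at stage $k$ require only $|V_0|,\dots,|V_{k-2}|\neq 0$ (not full strong properness), and you track the ``freezing'' of $g_{-1,0},\dots,g_{-1,k-1}$ after $k$ leaves \texttt{List} --- whereas the paper instead re-derives the vanishing $g_{k-2,k-1}^{(\alpha_i)}(x_{\beta_i})=0$ by a short direct computation; but these are presentational differences, not a different argument.
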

\begin{proof}
From Proposition \ref{prop:V.V} and Theorem \ref{gx}, to prove that
\begin{displaymath}
  \{g_{-1,0}(x), g_{0,1}(x), \dots, g_{N-2,N-1}(x)\}  
\end{displaymath}
is a Newton-type basis,
we only need to prove that the recursively updated
\begin{displaymath}
  \{g_{-1,0}(x), g_{-1,1}(x), \dots, g_{-1,N-1}(x)\}  
\end{displaymath}
 is a strongly proper basis.

We will use induction on the number of interpolation conditions.
When $k=1$, $ \delta_{x_{\beta_1}}\circ D_x^{\alpha_1}(g_{-1,0}(x))= \delta_{x_{\beta_1}}\circ D_x^{\alpha_1}(x^{\alpha_1})=\alpha_1!\neq 0.$
When $k=2$, $g_{-1,1}(x)=x^{\alpha_1+1+s}, s=1,2,\dots$. This corresponds to the execution process of $k=2$ in Algorithm 1. 

First, assume $g_{-1,1}(x)=x^{\alpha_1+1}$ holds and compute $g_{0,1}(x)$ using line 16 of Algorithm 1. If $g_{0,1}(x)$ satisfies the judgment condition (line 18 in Algorithm 1), this computational step is completed and
$g_{-1,1}(x),g_{0,1}(x)$ are stored. Otherwise, the degree of $g_{-1,1}(x)$ is increased, and 
$g_{0,1}(x)$ is recomputed recursively until a 
$g_{0,1}(x)$
 that satisfies 
$g_{0,1}^{(\alpha_2)}(x_{\beta_2})\neq 0$
 is obtained.
It is straightforward to verify that 
\begin{displaymath}
    g_{0,1}^{(\alpha_1)}(x_{\beta_1})=g_{-1,1}^{(\alpha_1)}(x_{\beta_1})-\frac{g_{-1,1}^{(\alpha_1)}(x_{\beta_1})}{g_{-1,0}^{(\alpha_1)}(x_{\beta_1})}g_{-1,0}^{(\alpha_1)}(x_{\beta_1})=0.
\end{displaymath} 
Thus, $\{g_{-1,0}(x),g_{0,1}(x)\}$ forms a set of strongly proper Newton-type basis for the first two interpolation conditions. Since $g_{0,1}(x)$ is a linear combination of $g_{-1,1}(x)$ and $g_{-1,0}(x)$, 
$\{g_{-1,0}(x),g_{-1,1}(x)\}$ constitutes a set of strongly proper monomial basis for the first two conditions.

Now assume the result has been proved for the first  $k-1$ interpolation conditions, that is,
$\{g_{-1,0}(x), g_{-1,1}(x), \dots, g_{-1,k-2}(x)\}$ is a strongly proper basis. By Proposition \ref{prop:V.V}, we know that $\{g_{-1,0}(x), g_{0,1}(x), \dots, g_{k-3,k-2}(x)\}$ is a Newton-type basis. 
First, we prove that $\{g_{-1,0}(x), g_{0,1}(x), \dots, g_{k-2,k-1}(x)\}$ is a Newton-type basis for the first $k$ conditions. By the induction hypothesis, it suffices to further verify that 
\begin{displaymath}
  g_{k-2,k-1}(x)=g_{k-3,k-1}(x)-\frac{g_{k-3,k-1}^{(\alpha_{k-1})}(x_{\beta_{k-1}})}{g_{k-3,k-2}^{(\alpha_{k-1})}(x_{\beta_{k-1}})}g_{k-3,k-2}(x) , 
\end{displaymath}
 satisfies: 
$\textup{(\romannumeral 1) }g_{k-2,k-1}^{(\alpha_k)}(x_{\beta_k})\neq 0;$ 
$\textup{(\romannumeral 2) }g_{k-2,k-1}^{(\alpha_i)}(x_{\beta_i})=0, \forall i=1,2,\dots,k-1$.

By line 18 of Algorithm 1, we know that the first inequality holds.
Note that $g_{-1,k-1}(x), g_{0,k-1}(x), \dots, g_{k-2,k-1}(x)$ are polynomials of the same degree. 
For $i=0,1,\dots,k-2$, it is easy to verify that $g_{i,k-1}(x)$ vanishes at the first $i+1$ conditions, i.e.,
\begin{displaymath}
    g_{i,k-1}^{(\alpha_j)}(x_{\beta_j})=0, \forall j=1,2,\dots,i+1.
\end{displaymath}
Furthermore, 
\begin{displaymath}
    g_{k-2,k-1}^{(\alpha_i)}(x_{\beta_i})=g_{k-3,k-1}^{(\alpha_i)}(x_{\beta_i})-\frac{g_{k-3,k-1}^{(\alpha_{k-1})}(x_{\beta_{k-1}})}{g_{k-3,k-2}^{(\alpha_{k-1})}(x_{\beta_{k-1}})}g_{k-3,k-2}^{(\alpha_i)}(x_{\beta_i}),  \forall i=1,2,\dots,k-1.
\end{displaymath}
It is obvious that $g_{k-2,k-1}^{(\alpha_i)}(x_{\beta_i})=0$ when $i=k-1$. For $i=1,2,\dots,k-2$, as shown in the preceding proof, we know that 
$g_{k-3,k-1}^{(\alpha_i)}(x_{\beta_i})=0$, and by the induction hypothesis, \[
\left\{
\begin{aligned}
&g_{k-3,k-2}^{(\alpha_i)}(x_{\beta_i})=0, \\
&g_{k-3,k-2}^{(\alpha_{k-1})}(x_{\beta_{k-1}})\neq 0.
\end{aligned}
\right.
\] 
Hence
\begin{displaymath}
    g_{k-2,k-1}^{(\alpha_i)}(x_{\beta_i})=0, \forall i=1,2,\dots,k-1.
\end{displaymath} Thus we have proved that $\{g_{-1,0}(x), g_{0,1}(x), \dots, g_{k-2,k-1}(x)\}$ forms a set of strongly proper Newton-type basis.

Since $$\textup{span}\{g_{-1,0}(x), g_{-1,1}(x), \dots, g_{-1,k-1}(x)\}=\textup{span}\{g_{-1,0}(x),g_{0,1}(x),\dots,g_{k-2,k-1}(x)\},$$ 
it follows that
$\{g_{-1,0}(x), g_{-1,1}(x), \dots, g_{-1,k-1}(x)\}$ constitutes a set of strongly proper monomial basis.
\end{proof}

For a given Birkhoff interpolation problem $(\ref{equa2.1})$  with $N$ interpolation conditions, Algorithm 1 computes $N(N-1)/2$ auxiliary polynomials in the order: $g_{-1,0}(x) \rightarrow g_{-1,1} (x)\rightarrow g_{0,1}(x) \rightarrow g_{-1,2}(x) \rightarrow g_{0,2}(x)\rightarrow g_{1,2}(x)  \rightarrow \dots$. That is,  if these auxiliary polynomials are arranged in a matrix form according to their subscripts (the row index starts from -1, the column index starts from 0), the algorithm computes them column by column during execution. From the perspective of the Vandermonde matrix corresponding to the interpolation problem, it is easy to verify the following conclusion.
\begin{theorem}\label{theo10}
For any fixed $k=2,3,\dots,N$, suppose that  the computation of the first $k-2$ columns has already been completed. Then,
for $j=0,1,\dots,k-2$,  the combination coefficients of $g_{j,k-1}(x)$ are  precisely the elimination coefficients obtained when performing column elimination on the Vandermonde matrix constructed from the basis
$$\{g_{-1,0}(x), g_{0,1}(x), \dots, g_{k-3,k-2}(x), g_{j-1,k-1}(x)\}$$
under the first $k$ interpolation conditions—specifically, elimination is carried out using the nonzero diagonal element in the $j$-th column to annihilate $g_{j-1,k-1}^{(\alpha_{j+1})}(x_{\beta{j+1}})$ to zero.
\end{theorem}
\noindent\textbf{Example 1} 
We consider the same example as in \cite{wen27}.
Let $x_0=1, x_1=2, x_2=3$, $y_{0,0}=5, y_{1,1}=6, y_{1,2}=4, y_{2,2}=7$.
The interpolation conditions are sorted into $N-$DOS as follows
\begin{displaymath}
(\beta_1,\alpha_1)=(0,0),\ (\beta_2,\alpha_2)=(1,1),\ (\beta_3,\alpha_3)=(1,2),\ (\beta_4,\alpha_4)=(2,2).
\end{displaymath}
According to Algorithm 1, all  auxiliary polynomials are listed in Table \ref{tbl1}.

\begin{table}[ht] \label{tbl1}
\centering
\caption{The auxiliary polynomials.}
\label{tbl1}
\begin{tabular}{c c c c}
\toprule
\(g_{-1,0}(x)=1\) & \(g_{-1,1}(x)=x\) & \(g_{-1,2}(x)=x^2\) & \(g_{-1,3}(x)=x^3\) \\
\midrule
 & \(g_{0,1}(x)=x-1\) & \(g_{0,2}(x)=x^2-1\) & \(g_{0,3}(x)=x^3-1\) \\
 &  & \(g_{1,2}(x)=x^2-4x+3\) & \(g_{1,3}(x)=x^3-12x+11\) \\
 &  &  & \(g_{2,3}(x)=x^3-6x^2+12x-7\) \\
\bottomrule
\end{tabular}
\end{table}

We can verify that the Vandermonde matrix takes the following form
\[
\begin{array}{c@{\hspace{8pt}}c}
  & \begin{array}{ccccc}
      \text{} & g_{-1,0}(x) & g_{0,1}(x) & g_{1,2}(x) & g_{2,3}(x) \\[4pt]
    \end{array} \\[2pt]
  \begin{array}{c}
  \renewcommand{\arraystretch}{2.5}
    \delta_{x_0} \\[8pt]
    \delta_{x_1}\circ D_x \\[8pt]
    \delta_{x_1}\circ D_x^2 \\[8pt]
     \delta_{x_2}\circ D_x^2  \\[8pt]
  \end{array}
  &  
  \left(
  \renewcommand{\arraycolsep}{12pt}
  \renewcommand{\arraystretch}{1.0} 
    \begin{array}{cccc}
     ~~ 1      &~~ 0      & ~~~~0    & ~~~0 \\[8pt]  
      ~~*      & ~~2      & ~~~~0    &~~~ 0 \\[8pt]  
      ~~*      & ~~*      & ~~~~-1    &~~~ 0 \\[8pt]  
      ~~*      & ~~*      & ~~~~*    &~~~ 4 \\[8pt]  
    \end{array}
  \right).
\end{array}
\]
The elements in the strictly lower triangular part of the matrix need not be calculated. $\{1, x-1, x^2-4x+3, x^3-6x^2+12x-7\}$ is a Newton-type basis and the interpolation polynomial is
\begin{displaymath}
p_3(x)=\frac{1}{2}x^3-x^2+4x+\frac{3}{2}.
\end{displaymath}
The set $\{1, x, x^2, x^3\}$ forms a minimal-degree basis, and consequently, $p_3(x)$ is the minimal-degree interpolation polynomial.
Applying the algorithm from \cite{wen27} to this example yields an interpolation polynomial of degree 6.
\begin{remark}
Theorem \ref{theo10} indicates that the construction of 
 the auxiliary polynomials in Algorithm 1 implicitly embodies the idea of Gaussian elimination. Compared with existing works that apply Gaussian elimination to solve Birkhoff interpolation problems \cite{lei}, the present approach differs mainly in the following two aspects: 1) During the iterative procedure of Algorithm 1, when computing the $k$-th Newton interpolation basis, $k=1,2,\dots,N$, only the first $k$ interpolation conditions are evaluated on the auxiliary polynomial and processed by Gaussian elimination, without considering all $N$ conditions. As illustrated in Example 1, the entries below the main diagonal of the resulting Vandermonde matrix are never accessed during the algorithm, which reduces computational cost.
 2) In the process of sequentially computing all auxiliary polynomials in the $(k-1)$-th column, once a later polynomial is computed using the previous one, the earlier polynomials need not be retained. Ultimately, only the diagonal term $g_{k-2,k-1}(x)$ needs to be preserved. This  significantly reduces storage space.
\end{remark}

\begin{remark}
While both the proposed algorithm and the method of \cite{wen27} are generalizations of GRPIA \cite{wen20} and employ the Schur complement theory within a univariate Birkhoff interpolation framework, they are rooted in fundamentally different principles. In \cite{wen27},  the authors address the Birkhoff interpolation problem by constructing high-degree interpolation bases that admit factorization, ultimately yielding a simplified version of the RHBPIA algorithm. The proposed Algorithm 1 introduces a criterion to ensure the well-posedness of the Birkhoff interpolation problem, thereby yielding lower-degree Newton bases and interpolation polynomials. We further propose Algorithm 2 to address a more general Birkhoff interpolation problem whose conditions are composed of evaluation functionals and differential polynomials, extending beyond those involving only partial derivatives. This extension represents another key difference from the RHBPIA method.
\end{remark}

\noindent\textbf{Example 2}
Let $x_0=-1, x_1=0, x_2=1$, $y_{0,0}=2, y_{1,1}=4, y_{2,0}=6, y_{2,1}=8$.
By Definition \ref{def:order},
\begin{displaymath}
(\beta_1,\alpha_1)=(0,0),\ (\beta_2,\alpha_2)=(2,0),\ (\beta_3,\alpha_3)=(1,1),\ (\beta_4,\alpha_4)=(2,1).
\end{displaymath}
According to Algorithm 1, the auxiliary polynomials are shown in the following table.
\begin{table}[ht]
\centering
\caption{The auxiliary polynomials.}\label{tbl2}
\begin{tabular}{c c c c}
\toprule
\(g_{-1,0}(x)=1\) & \(g_{-1,1}(x)=x\) & \(g_{-1,2}(x)=x^3\) & \(g_{-1,3}(x)=x^4\) \\
\midrule
 & \(g_{0,1}(x)=x+1\) & \(g_{0,2}(x)=x^3+1\) & \(g_{0,3}(x)=x^4-1\) \\
 &  & \(g_{1,2}(x)=x^3-x\) & \(g_{1,3}(x)=x^4-1\) \\
 &  &  & \(g_{2,3}(x)=x^4-1\) \\
\bottomrule
\end{tabular}
\end{table}

$\{g_{-1,0}(x),g_{-1,1}(x),g_{-1,2}(x),g_{-1,3}(x)\}=\{1,x,x^3,x^4\}$ is a strongly proper monomial basis. It is easy to verify that
\begin{displaymath}
g_{-1,0}(x_0)=1,\  g_{0,1}(x_2)=2,\   g_{1,2}^{(1)}(x_1)=-1,\  g_{2,3}^{(1)}(x_2)=4.
\end{displaymath}

  
The set $\{1, x+1, x^3 - x, x^4 - 1\}$ constitutes the Newton-type basis with respect to the given interpolation conditions. The interpolation polynomial computed by the recursive algorithm is
\begin{displaymath}
p_3(x)=\frac{5}{2}x^4-2x^3+4x+\frac{3}{2}.
\end{displaymath}

\subsection{Algorithm 2}

By computing the Vandermonde determinant, we can verify that the set $\{1, x, x^2, x^3\}$ also constitutes a proper monomial basis for Example 2.
This implies that the interpolation polynomial produced by Algorithm 1 is not necessarily of minimal degree.
Furthermore, Algorithm 1 is restricted to cases where the interpolation conditions consist of monomial differential operators and evaluation functionals.
We enhance Algorithm 1, making it capable of handling interpolation conditions formed by the composition of differential polynomials and evaluation functionals.
Moreover, Algorithm 2 offers the additional benefit of degree reduction in specific examples by reordering the interpolation conditions.

In Algorithm 2, we let $L_1,L_2,\dots,L_N$  denote the interpolation conditions, where every $L_i, i=1,2,\dots,N$ is the composite operation of an evaluation functional and a differential polynomial, and we sort them in ascending order of the highest derivative order they involve.  
Namely, if we let $\alpha_i$ be the highest-order derivative of $L_i$, then $\alpha_1\leq \alpha_2\leq\dots\leq \alpha_N$.
For example, suppose the interpolation conditions at $x_0$ are $\delta_{x_0}\circ D_x, \delta_{x_0}\circ (1+D_x^2)$, and the interpolation conditions at $x_1$ are $\delta_{x_1}\circ (1+D_x), \delta_{x_1}\circ (D_x^2+D_x^3)$, we then sort the interpolation conditions into the sequence:
\begin{displaymath}
L_1=\delta_{x_0}\circ D_x,\ 
L_2=\delta_{x_1}\circ (1+D_x),\ L_3=\delta_{x_0}\circ (1+D_x^2),\ L_4=\delta_{x_1}\circ (D_x^2+D_x^3). 
\end{displaymath}

\begin{algorithm}
	\caption{~}
	\label{alg:revredbasis}
	\begin{algorithmic}[1]
    \State{\textbf{Input}}:
    \State{The Birkhoff interpolation conditions $L_1, L_2, \dots, L_N.$\\
    The highest-order derivatives $\alpha_1,\alpha_2,\dots,\alpha_N 
\ (\alpha_1\leq \alpha_2\leq\dots\leq \alpha_N)$.\\
    The interpolation values $y_1, y_2,\dots,y_N$}.
    \State{\textbf{Output}}: 
    \State{The Newton-type basis $\{g_{-1,0}(x),g_{0,1}(x),\dots,g_{N-2,N-1}(x) \}$. \\ The interpolation polynomial $p_{N-1}(x)$.}
    \State{ $//$ Initialization:\\
     $g_{-1,i}(x):=x^{\alpha_1+i}, i=0,1, \dots, N-1$,\\
    $p_{-1}(x):=0$,
    $p_{0}(x):=p_{-1}(x)+\dfrac {y_1-L_1(p_{-1}(x))}{L_1(g_{-1,0}(x))}g_{-1,0}(x)=\frac{y_1}{\alpha _1!}x^{\alpha _1}$,
    \State{List:=$\{2,3,\dots,N\}$}.
    \State{$\rm{\textbf{while\ }} List \neq  \varnothing\rm{\textbf{\ do}}$}
       \State{\hspace{1cm}$k:$=Min(List)};
       \State{\hspace{1cm}\rm{\textbf{for}\ $j=0,1,\dots,k-2$}}
          \State{$\hspace{2cm}g_{j,k-1}(x):=g_{j-1,k-1}(x)-\dfrac{L_{j+1}(g_{j-1,k-1}(x))}{L_{j+1}(g_{j-1,j}(x))}g_{j-1,j}(x)$}
       \State{$\hspace{1cm}{\rm{\textbf{end}}}\hspace{2mm} j$}
       \State{$\hspace{1cm}{\rm{\textbf{if}}}\ L_k(g_{k-2,k-1}(x))\neq 0$}
          \State{\hspace{2cm}{List:=List}$-$$ \{k\}$}
    \State{\hspace{2cm} $p_{k-1}(x):=p_{k-2}(x)+\dfrac {y_k-L_k(p_{k-2}(x))}{L_k(g_{k-2,k-1}(x))}g_{k-2,k-1}(x)$}
    \State{\hspace{1cm}{\rm{\textbf{else if\ }}$L_k(g_{k-2,k-1}(x))=\dots=L_{k+s-1}(g_{k-2,k-1}(x))= 0$ and $L_{k+s}(g_{k-2,k-1}(x))\neq 0, s=1,2,\dots,N-k$}}
    \State{\hspace{2cm}$L_k \leftrightarrow L_{k+s}$(Swap the positions of $L_k$ and $L_{k+s}$)}
       \State{\hspace{1cm}{\rm{\textbf{else}}}}
          \State{$\hspace{2cm}{\rm{\textbf{for}}}\ d=k-1, k,\dots,N-1$}
             \State{\hspace{3cm}$g_{-1,d}(x):=x*g_{-1,d}(x)$}
          \State{$\hspace{2cm}{\rm{\textbf{end}}}\hspace{2mm} d $}
       \State{\hspace{1cm}{\rm{\textbf{end\ if}}}}
    \State{\rm{\textbf{end\ while}}}}
	\end{algorithmic}
\end{algorithm}
\newpage

\noindent\textbf{Example 3}
We recalculate Example 2 using the procedure of Algorithm 2.

For the first two interpolation conditions in Example 2, Algorithm 1 allows direct computation of the strongly proper basis $\{1,x\}$.
When computing with the initialized $g_{-1,2}(x)=x^2$, following line 16 of Algorithm 1 we obtain $g_{1,2}(x)=x^2-1$.
Since $g^{(\alpha_3)}_{1,2}(x_{\beta_3})=0$, it does not satisfy the judgment condition, this indicates that $\{g_{-1,0}(x),g_{0,1}(x),g_{1,2}(x)\}$ is not a strongly proper interpolation basis for the interpolation conditions $\{L_1,L_2,L_3\}$. 
Therefore, Algorithm 1 increases the degree of $g_{-1,2}(x)$.
In the framework of Algorithm 2,  if $g^{(\alpha_3)}_{1,2}(x_{\beta_3})=L_3(g_{1,2}(x))=0$, , we need to check  whether  the value of the subsequent interpolation conditions acting on $g_{1,2}(x)$  is equal to zero.
Actually, $L_4(g_{1,2}(x))=\delta_{x_2}\circ D_x(x^2-1)=2 \neq 0$. 
Following lines 20 and 21 of Algorithm 2, we swap the positions of $L_3$ and $L_4$.
Under the new order of interpolation conditions, $\{g_{-1,0}(x),g_{0,1}(x),g_{1,2}(x)\}$ is a strongly proper interpolation basis for interpolation conditions $\{L_1,L_2,L_3\}$.

The final ordering of the interpolation conditions produced by this process is
\begin{displaymath}
    L_1=\delta_{x_0},\ 
L_2=\delta_{x_2},\ L_3=\delta_{x_2}\circ D_x,\ L_4=\delta_{x_1}\circ D_x,
\end{displaymath}   
the Newton-type basis is $\{1,x+1,x^2-1,x^3-x^2-x+1\}$ and the interpolation polynomial is
\begin{displaymath}
    p_3(x)=-2x^3+5x^2+4x-1.
\end{displaymath}

Algorithm 2 demonstrates a key advantage over Algorithm 1 by producing a lower-degree interpolation polynomial in Example 2.
\\ \\
\noindent\textbf{Example 4}
Let $x_0=1$, $x_1=2$, and
the interpolation conditions
\begin{displaymath}
L_1=\delta_{x_0}\circ D_x,\ 
L_2=\delta_{x_1}\circ (1+D_x),\ L_3=\delta_{x_0}\circ (1+D_x^2),\ L_4=\delta_{x_1}\circ (D_x^2+D_x^3). 
\end{displaymath}
Through Algorithm 2, we obtain
\begin{table}[ht]
\centering
\caption{The auxiliary polynomials.}\label{tbl3}
\begin{tabular}{c c c c}
\toprule
\(g_{-1,0}(x)=x\) & \(g_{-1,1}(x)=x^2\) & \(g_{-1,2}(x)=x^3\) & \(g_{-1,3}(x)=x^4\) \\
\midrule
 & \(g_{0,1}(x)=x^2-2x\) & \(g_{0,2}(x)=x^3-3x\) & \(g_{0,3}(x)=x^4-4x\) \\
 &  & \(g_{1,2}(x)=x^3-\frac{11}{2}x^2+8x\) & \(g_{1,3}(x)=x^4-18x^2+32x\) \\
 &  &  & \(g_{2,3}(x)=x^4-6x^3+15x^2-16x\) \\
\bottomrule
\end{tabular}
\end{table}


If four interpolation values $y_1=1,\  y_2=3,\ y_3=2, \ y_4=4$ are given, the interpolation polynomial is
\begin{displaymath}
p_3(x)=\frac{13}{27}x^4-\frac{32}{9}x^3+\frac{98}{9}x^2-\frac{325}{27}x.
\end{displaymath}

\section{Conclusions}\label{sec5}
Due to the discontinuity of the derivative conditions at the interpolation nodes in the Birkhoff interpolation problem, the set of all polynomials satisfying the homogeneous interpolation conditions fails to form an ideal.
Consequently, the computation of the corresponding interpolation basis and the interpolating polynomial becomes more complex.
This paper generalizes the GRPIA and presents two recursive algorithms for computing the strongly proper interpolation basis, the Newton-type basis, and the interpolating polynomial of the Birkhoff interpolation problems. 
For the case where the interpolation conditions consist of monomial differential operators and evaluation functionals, we first reorder the interpolation conditions to obtain an $N-$DOS. 
Then, using the theory of Schur complement, we derive formulas for the corresponding Newton basis and the interpolating polynomial recursively. 
In contrast to existing algorithms that compute the Birkhoff interpolation basis using Gaussian elimination, our recursive approach achieves a substantial reduction in computational cost and storage space.
Simultaneously, Algorithm 1 incorporates a core judgment condition (Line 18 of Algorithm 1) to ensure the non-singularity of the Vandermonde matrix for the Birkhoff interpolation problem. It is possible that in some numerical examples, Algorithm 1 may not yield a minimal-degree interpolation basis.
To obtain lower-degree interpolation basis, we develop Algorithm 2 as an improved version.
Furthermore, Algorithm 2 is capable of solving more general Birkhoff interpolation problems, where the conditions are composed of differential polynomials and evaluation functionals.
\section*{Acknowledgments}
This research was supported by the Department of Education of Jilin Province (JJKH20250468KJ), China.


\begin{thebibliography}{999}
\bibitem{wen1}I.J. Schoenberg, On Hermite-Birkhoff interpolation, J. Math. Anal. Appl. 16(3) (1966) 538--543.

\bibitem{wen2}P. Tur\'{a}n, On some open problems of approximation theory, J. Approx. Theory. 29(1) (1980) 23--85.

\bibitem{wen7}Y.G. Shi, Theory of Birkhoff Interpolation, Nova Science Publishers, New York, 2003.

\bibitem{ration}P. Xia, N. Lei, and T. Dong, On the linearization methods for univariate Birkhoff rational interpolation, Appl. Math. Comput. 445 (2023) 127831.
 
\bibitem{wen11}K. Atkinson, A. Sharma, A partial characterization of poised Hermite-Birkhoff interpolation problems, SIAM J. Numer. Anal. 6(2) (1969) 230--235.

\bibitem{wen12}B. Bojanov, Y. Xu, On polynomial interpolation of two variables, J. Approx. Theory. 120(2) (2003) 267--282.

\bibitem{lei}J. Chai, N. Lei, Y. Li, et al., The proper interpolation space for multivariate Birkhoff interpolation, J. Comput, Appl. Math. 235(10) (2011) 3207--3214.

\bibitem{wen14}N. Crainic, Generalized Birkhoff interpolation schemes: conditions for almost regularity, Acta Univ. Apulensis. 6 (2003) 101--110.

\bibitem{wen15}N. Crainic, UR Birkhoff interpolation with rectangular sets of derivatives, Comment. Math. Univ. Carolinae. 45(4) (2004) 583--590.

\bibitem{k.of}O.F. Kashpur, Hermite–Birkhoff interpolation polynomial of minimum norm in Hilbert space, Cybern Syst Anal. 57(5) (2021) 803--808.

\bibitem{wen19}A. Messaoudi, H. Sadok, Recursive polynomial interpolation algorithm (RPIA), Numer. Algorithms. 76(3) (2017) 675--694. 

\bibitem{wen20}A. Messaoudi, M. Errachid, K. Jbilou, et al., GRPIA: a new algorithm for computing interpolation polynomials, Numer. Algorithms. 80(1) (2019) 253--278.

\bibitem{wen22}A. Messaoudi, R. Sadaka, H. Sadok, New algorithm for computing the Hermite interpolation polynomial, Numer. Algorithms. 77(4) (2018) 1069--1092.

\bibitem{wen21}M. Errachid, A. Essanhaji, A. Messaoudi, RMVPIA: a new algorithm for computing the Lagrange multivariate polynomial interpolation, Numer. Algorithms. 84(4) (2020) 1507--1534.

\bibitem{wen23}M. Errachid, A. Essanhaji, A. Messaoudi, Dimensional reduction for multivariate Lagrange polynomial interpolation problems, Electron. Trans. Numer. Anal. 60 (2024) 123--135. 

\bibitem{wen27}O. Rhouni, M. Errachid, M. Esghir, RHBPIA: a new algorithm for computing Hermite-Birkhoff interpolation polynomials, Numer. Algorithms. (2025) 1--28. 

\bibitem{3M}M.G. Marinari, H.M. M\"{o}ller, T. Mora, Gr\"{o}bner bases of ideals defined by functionals with an application to ideals of projective points, Appl. Algebra Engrg. Comm. Comput. 4(2) (1993) 103--145.

\bibitem{82lunwen}H.G. Heuser, Functional Analysis, Wiley, 1982.

\bibitem{wen24}C. Brezinski, Other manifestations of the Schur complement, Linear Algebra Appl. 111 (1988) 231--247. 

\bibitem{F} D.K. Faddeev, V.N. Faddeeva, Computational Methods of Linear Algebra, Freeman, San
Francisco, 1963.
\bibitem{G} G.H. Golub, C.F. Van Loan, Matrix Computations, Johns Hopkins University Press, 1996.

\bibitem{wen26} D.V. Ouellette, Schur complements and statistics, Linear Algebra Appl. 36 (1981) 187--295.

\bibitem{wen25}R.W. Cottle, Manifestations of the Schur complement, Linear Algebra Appl. 8(3)  (1974) 189--211.

\end{thebibliography}
\end{document}